\theoremstyle{plain}
\newtheorem{theorem}{Theorem}[section]
\newtheorem{corollary}{Corollary}[section]
\newtheorem{lemma}{Lemma}[section]
\newtheorem{remark}{Remark}[section]
\newtheorem{example}{Example}[section]
\newcommand{\PSL}{\mathop{\mathrm{PSL}}}
\newcommand{\PSU}{\mathop{\mathrm{PSU}}}
\newcommand{\Sz}{\mathop{\mathrm{Sz}}}
\newcommand{\PSp}{\mathop{\mathrm{PSp}}}
\newcommand{\SL}{\mathop{\mathrm{SL}}}
\newcommand{\SU}{\mathop{\mathrm{SU}}}
\begin{document}


  \setcounter{Maxaffil}{3}
   \title{On chordality of the power graph of finite groups}
   \author[ ]{Pallabi Manna\thanks{mannapallabimath001@gmail.com}}
   \author[ ]{Ranjit Mehatari\thanks{ranjitmehatari@gmail.com, mehatarir@nitrkl.ac.in}}
   \affil[ ]{Department of Mathematics,}
   \affil[ ]{National Institute of Technology Rourkela,}
   \affil[ ]{Rourkela - 769008, India}
   \maketitle
\begin{abstract}
A graph is called chordal if it forbids induced cycles of length $4$ or more. In this paper, we attempt to identify the non-nilpotent groups whose power graph is a chordal graph (this question was raised by Cameron in \cite{Cameron_1}). In this direction, we characterise the direct product of finite groups having chordal power graphs. We classify all finite simple groups of Lie type whose power graph is chordal. Further, we prove that the power graph of a sporadic simple group is always non-chordal. In addition, we show that almost all groups of order up to $47$ have chordal power graphs.
\end{abstract}
{\bf AMS Subject Classification (2020): }20D60, 05C25.\\
{\bf Keywords: } Power graph, chordal graph,  EPPO group, nilpotent group, simple group.


\section{Introduction}
In algebraic graph theory, we relish nice applications of abstract algebra in graph theory. In this regard, numerous graph structures on algebraic structures were defined by various academicians. Such graph structures add a very beautiful combinatorial flavour to abstract algebra. Group-derived graphs exhibit excellent network features, such as big girth, which is essential for assessing the network's connectedness. Additionally, these graphs have many applications in various research areas, including automata theory \cite{K_1, K_2}.  Beginning with the oldest graph, the Cayley (di)graph of a group, researchers have developed other graphs related to various algebraic structures, including power graphs, enhanced power graphs, commuting graphs, deep commuting graphs, generating graphs, conjugate graphs, coset graphs, non-commuting graphs, zero divisor graphs, intersection graphs, etc. However, we are only interested in the power graph of finite groups, which was first defined in 2002. This  type of graph was initially discovered as directed graphs of finite  semigroups by Kelarev and Quinn in \cite{Kelarev}. For a given semigroup $S$, the directed power graph of $S$ (denoted by $\overrightarrow{\mathcal{P}}(S)$) is a graph $(V,E)$ where $V=S$ and $E=\{(a,b): a \neq b,  b=a^{m}\text{ for some  }m\in \mathbb{N}\}$. The corresponding underlying graph was developed in 2009 by Chakrabarty \emph{et al.}~as the notion of undirected power graph of semigroups in \cite{Chakrabarty}. For a given semigroup $S$, the undirected power graph of $S$ (denoted by $\mathcal{P}(S)$) is a graph $(V,E)$ where $V=S$ and $E=\{\{a,b\}: a \neq b, \text{ and either }a=b^{m}\text{ or } b=a^{n},\text{ for some }m,n \in \mathbb{N}\}$. Later on, this definition was  adopted to describe the power graph of a finite group. More than 100 academic publications have been written that describe power graphs in various ways. Here, we have merely covered a few of the significant contributions made to the field of power graphs. 
\medskip

In \cite{Chakrabarty}, Chakrabarty \emph{et al.}~proved that if $G$ is a finite group, then $\mathcal{P}(G)$ is always connected and described a necessary and sufficient condition for the power graph $\mathcal{P}(G)$ of a finite group $G$ to be a complete graph. Additionally, there is a formula for counting the number of edges in $\mathcal{P}(G)$, given by $|E(\mathcal{P}(G))|=\frac{1}{2}[\sum_{a \in G}(2o(a)- \phi(o(a))-1)]$. In the same publication, we get the conclusion that  $\mathcal{P}(G)$ is Eulerian if and only if $G$ is a group of odd order.  \medskip

The proper power graph $\mathcal{P}^*(G)$ of a finite group $G$ is the graph obtained by deleting the identity element from $\mathcal{P}(G)$. In \cite{Curtin}, Curtin \emph{et al.}~determined the diameter of $\mathcal{P}^*(S_{n})$, where $S_n$ denotes the symmetric group on $n$ symbols. \medskip

The article \cite{Cameron_2} contains an interesting isomorphism theorem for power graphs of finite groups. In that paper, Cameron derived that if {$G_1$ and $G_2$} are two finite groups, then $\mathcal{P}(G_1) \cong \mathcal{P}(G_2)$ implies $\overrightarrow{\mathcal{P}}(G_1) \cong \overrightarrow{\mathcal{P}}(G_2)$.
Additionally, in \cite{Ghosh}, Cameron and Ghosh proved that  $\mathcal{P}(G_1) \cong \mathcal{P}(G_2)$ implies $G_1 \cong G_2$ for any two finite abelian groups {$G_1$ and $G_2$}. Further, they showed that if $G_1$ and $G_2$ are two finite groups with $\overrightarrow{\mathcal{P}}(G_1) \cong \overrightarrow{\mathcal{P}}(G_2)$, then $G_1$ and $G_2$ contain the same number of elements of each order. The same publication demonstrates that $Aut(G)=Aut(\mathcal{P}(G))$ holds only if $G$ is the Klein $4$-group. We have observed that, in general, an isomorphism between the power graphs of two groups may not result in an isomorphism between their directed power graphs. However, in \cite{Cameron_3}, Cameron showed that any isomorphism that preserves orientation induces an isomorphism between their directed power graphs by taking into consideration the torsion-free nilpotent groups of class $2$, especially $\mathbb{Z}$ and $\mathbb{Q}$. Zahirovi\'c \cite{ Samir} proved that $\mathcal{P}(G_1) \cong \mathcal{P}(G_2)$ implies $\overrightarrow{\mathcal{P}}(G_1) \cong \overrightarrow{\mathcal{P}}(G_2)$ for any two torsion-free groups $G_1$ and $G_2$.
For more interesting results related to power graphs, we refer to two survey papers \cite{Abawajy, Kumar}.\medskip

Several significant graph classes can be defined in terms of induced forbidden subgraphs. These graph classes {include} of threshold graphs, split graphs, cographs, and chordal graphs, among others. In a previous paper \cite{Manna}, we started our discussion about the various forbidden induced subgraphs of power graphs. While only partial results were developed for cographs and chordal graphs, we have successfully identified all groups whose power graphs are split and threshold. Also, we characterised all finite nilpotent groups whose power graphs are cographs and chordal graphs. We generalised a little more in \cite{Mehatari} by attempting to address the open problem, ``classify all the finite groups whose power graph is a cograph." \medskip

 A graph is said to be \textit{chordal} if any cycle of length four and more has a chord, or in terms of forbidden subgraphs, we can define a chordal graph as a graph with no induced cycle of length more than $3$. Identifying whether a graph is chordal or not is always a difficult task. If a graph is a cograph, it does not necessarily mean that it is a chordal graph; in order to determine whether a cograph is chordal, we {need only determine} whether it has any cycle of length four. If a graph is not a cograph, however, difficulties arise.\medskip

It is clear that if a graph $\Gamma$ is chordal, then all its induced subgraphs are also chordal. So, it follows immediately that if a graph $\Gamma$ contains a non-chordal induced subgraph, then $\Gamma$ is also a non-chordal graph. Thus, we can say the chordality of a graph is subgraph-closed. The subgraph-closed property of chordality plays a crucial role in the case of a power graph. It is well known that if $G$ is a finite group and $H$ is a subgroup of $G$, then $\mathcal{P}(H)$ is an induced subgraph of $\mathcal{P}(G)$. Thus, to draw a conclusion about the non-chordality of the power graph of a finite group $G$, it suffices to show that $G$ contains a subgroup whose power graph is not a chordal graph.

We will use the term power-chordal group for a finite group whose power graph is a chordal graph. Analogously, we call a group $G$ non-power-chordal if $G$ is not power-chordal.\medskip

{The goal of this paper is} to locate non-nilpotent groups whose power graph is a chordal graph (a question was raised by Cameron in \cite{Cameron_1}). We only partially address this problem in this paper. {This paper is organized as follows.}

In Section 2, we review some significant definitions and findings, which we will use later on. In Section 3, we completely identify all possible finite groups $G$ and $H$ for which the power graph of $G \times H$ is a chordal graph. In Section 4, we prove that the power graph of $S_n$ is chordal if and only if $n\leq5$. In Section 5, finite simple groups are considered. In this section, we prove that no sporadic simple group is power-chordal. Additionally, we give a complete characterization for the classes of all finite {simple group of Lie type} whose power graph is a chordal graph. In this regard, we obtain: if $G$ is a finite {simple group of Lie type}, then $\mathcal{P}(G)$ is a chordal graph if $G$ is any one of the following groups. 
\begin{itemize}
    \item 
    $G=\PSL(2,q)$, where $q$ is a prime power and {$q$ is odd}, such that both $(q\pm 1)/2$ are either a power of prime or of the form $p_{1}^{a}p_{2}$ for distinct primes $p_{1}$, $p_{2}$;
    \item 
    $G=\PSL(2,q)$, where $q$ is a prime power and $q$ is even, such that both $q\pm 1$  are either a power of prime or of the form $p_{1}^{a}p_{2}$ for distinct primes $p_{1}$, $p_{2}$;
\item 
$G=\Sz(q)$ with $q=2^{2e+1}$ such that $q-1, q+\sqrt{2q}+1$ and $q-\sqrt{2q}+1$ are either prime power or of the form $p_{1}^{a}p_{2}$ for distinct primes $p_{1}$, $p_{2}$;
\item
 $G={}^{2}{G_{2}}(q)=R_{1}(q)$ where $q=3^{2e+1}$, for infinitely many values of $q$;
\item 
$\PSL(3,2)$, $\PSL(3,4)$.
\end{itemize}
In the last section, we compute certain finite groups of small order whose power graph is a chordal graph. This section also provides some nice applications of the results obtained in sections 3 and 4. Here our major observation is that if $G$ is the group {of order at most $47$} except $24$, $30$, $36$, $40$, and $42$, then $\mathcal{P}(G)$ is always a chordal graph, whereas there are several groups with order $24$, $30$, $36$, $40$, or $42$ whose power graph is not a chordal graph.


\section{Preliminaries}
In this section, we will review some fundamental definitions, terminologies, and {results} that will be used in subsequent sections. We only consider finite groups in this study. We use the notation $C_{n}$ to represent the cyclic group of order $n$ as well as {the graph which is a cycle of length $n$}; the context will make this notation apparent. Also, we use $D_{n}$ to denote the dihedral group of order $2n$. 
Now we recall two theorems from our previous paper \cite{Manna} regarding the chordality of the power graph of finite nilpotent groups.
\begin{theorem}
\cite{Manna}
\label{Chordal_pre_th1}
The power graph of a finite $p$-group is a chordal graph.
\end{theorem}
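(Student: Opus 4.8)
The plan is to prove the contrapositive-free statement directly, by showing $P(G)$ contains no induced cycle of length $\geq 4$, and I would build everything on two structural facts. First I would record the edge characterisation in terms of cyclic subgroups: two distinct elements $a,b$ are adjacent in $P(G)$ exactly when $\langle a\rangle\subseteq\langle b\rangle$ or $\langle b\rangle\subseteq\langle a\rangle$, since $a=b^{m}$ is equivalent to $a\in\langle b\rangle$, i.e. $\langle a\rangle\subseteq\langle b\rangle$. Second, and crucially, every cyclic subgroup of a $p$-group is a cyclic $p$-group, and the subgroups of a cyclic group of order $p^{k}$ form a chain $1<C_{p}<C_{p^{2}}<\cdots<C_{p^{k}}$ under inclusion, there being a unique subgroup of each order. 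Consequently any two subgroups contained in a common cyclic $p$-group are comparable; equivalently, for every $a\in G$ the set $\langle a\rangle$ induces a clique in $P(G)$, because for $x,y\in\langle a\rangle$ the subgroups $\langle x\rangle,\langle y\rangle$ are nested and hence $x\sim y$.

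Next I would run the contradiction. Suppose $a_{1}-a_{2}-\cdots-a_{k}-a_{1}$ is an induced cycle with $k\geq 4$, and choose an index $i$ for which $o(a_{i})$ is maximal among the cycle vertices (this $a_{i}$ is non-identity, since $k\geq 4$ guarantees non-identity vertices). For the edge $a_{i-1}\sim a_{i}$, the nesting characterisation gives $\langle a_{i-1}\rangle\subseteq\langle a_{i}\rangle$ or $\langle a_{i}\rangle\subseteq\langle a_{i-1}\rangle$; in the second case maximality of $o(a_{i})$ forces $\langle a_{i-1}\rangle=\langle a_{i}\rangle$, which is again a containment. Either way $a_{i-1}\in\langle a_{i}\rangle$, and symmetrically $a_{i+1}\in\langle a_{i}\rangle$ (indices read modulo $k$). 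Since $\langle a_{i}\rangle$ is a cyclic $p$-group, its two elements $a_{i-1}$ and $a_{i+1}$ are adjacent in $P(G)$ by the clique fact above. But for $k\geq 4$ the vertices $a_{i-1}$ and $a_{i+1}$ are distinct and non-consecutive on the cycle, so this adjacency is a chord, contradicting that the cycle is induced. Hence no induced cycle of length $\geq 4$ exists and $P(G)$ is chordal.

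I would also note the equivalent formulation through a perfect elimination ordering, which is the language emphasised in the preliminaries: ordering the vertices by non-increasing element order (ties broken arbitrarily) should be a PEO, because for any vertex $a$ every later neighbour $b$ satisfies $o(b)\leq o(a)$ and the nesting characterisation then forces $b\in\langle a\rangle$, so the later neighbours of $a$ all sit inside the clique $\langle a\rangle\setminus\{a\}$. The only point requiring care in either formulation is the same elementary lemma, namely that the subgroup lattice of a cyclic $p$-group is totally ordered, together with the bookkeeping that the two neighbours of the maximal-order vertex are genuinely non-adjacent on an induced cycle of length $\geq 4$; this is exactly where the hypothesis $k\geq 4$ (as opposed to a triangle) is used. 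I expect no serious obstacle beyond keeping this case analysis clean: the essential content is that the $p$-group structure converts local adjacency into subgroup nesting, and nesting inside a single cyclic $p$-group is total.
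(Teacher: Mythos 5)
Your proof is correct, and there is nothing in the paper to compare it against line by line: Theorem~\ref{Chordal_pre_th1} is stated here without proof, being recalled from the earlier paper \cite{Manna}. Both of your formulations are sound. The adjacency characterisation ($a\sim b$ iff $\langle a\rangle\subseteq\langle b\rangle$ or $\langle b\rangle\subseteq\langle a\rangle$) is exactly right, and you correctly isolate the one place where the $p$-group hypothesis enters: the subgroup lattice of a cyclic $p$-group is a chain, so every $\langle a\rangle$ induces a clique in $P(G)$. The maximal-order-vertex argument is watertight --- on an induced cycle of length $k\geq 4$ the two neighbours $a_{i-1},a_{i+1}$ of a vertex of maximal order both land in $\langle a_i\rangle$ (your handling of the degenerate containment $\langle a_i\rangle\subseteq\langle a_{i\pm1}\rangle$ via equality of orders is correct), and they are distinct and non-consecutive precisely because $k\geq 4$, so their forced adjacency is a chord. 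Your alternative PEO formulation (order vertices by non-increasing element order) is also valid and dovetails with the machinery the paper itself sets up in Theorem~\ref{Chordal_pre_th7}: every later neighbour $b$ of $a$ satisfies $o(b)\leq o(a)$, whence $b\in\langle a\rangle$, and $\langle a\rangle\setminus\{a\}$ is a clique, so $a$ is simplicial among the remaining vertices. Either route is a complete, self-contained proof of the cited result.
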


\begin{theorem}
\cite{Manna}
\label{Chordal_pre_th5}
Let $G$ be a finite nilpotent group which is not a $p$-group. Then $\mathcal{P}(G)$ is a chordal graph if and only if $|G|$ has exactly two distinct prime divisors  such that one of two Sylow subgroups is cyclic and the other one has prime exponent.
\end{theorem}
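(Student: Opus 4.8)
The backbone of my approach is the observation that for a nilpotent $G$ with Sylow decomposition $G=P_{1}\times\cdots\times P_{k}$, two elements $g=(x_{1},\dots,x_{k})$ and $h=(y_{1},\dots,y_{k})$ are joined in $P(G)$ precisely when one of $\langle g\rangle,\langle h\rangle$ contains the other, and since $\langle g\rangle=\langle x_{1}\rangle\times\cdots\times\langle x_{k}\rangle$ (the factors having coprime orders) such a containment holds iff it holds in every coordinate. Hence $P(G)$ is the comparability graph of the poset $\mathcal L$ of cyclic subgroups of $G$, each ``order type'' $(\langle x_{1}\rangle,\dots,\langle x_{k}\rangle)$ being blown up into a clique. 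Comparability graphs carry no induced odd cycle of length $\ge 5$, so $P(G)$ is chordal if and only if $\mathcal L$ admits no induced even cycle, equivalently no crown subposet. I would organize the whole proof around excluding such crowns in the good case and producing them in the bad ones.

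For the \emph{if} direction assume, without loss of generality, that $P$ is cyclic of order $p^{a}$ and $Q$ has exponent $q$. Write $A$ for the set of elements with trivial $Q$-component; since $A\cong P$ is a cyclic $p$-group, $P(A)$ is complete (Theorem~\ref{Chordal_pre_th1}), so $A$ is a clique. For each subgroup $\sigma$ of order $q$ in $Q$ let $B_{\sigma}$ be the elements whose $Q$-component generates $\sigma$; then $A\cup B_{\sigma}$ is exactly the vertex set of $P\times\sigma\cong C_{p^{a}q}$, so the induced subgraph on it is $P(C_{p^{a}q})$, which is chordal (it is the comparability graph of the divisor lattice of $p^{a}q$, a product of two chains). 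As $Q$ has exponent $q$, every nonidentity $Q$-component generates some such $\sigma$, and two elements lying over distinct $\sigma\neq\sigma'$ are never adjacent (their cyclic subgroups already differ in the $Q$-coordinate). Thus $P(G)$ is obtained by gluing the chordal graphs $P(C_{p^{a}q})$ along the common clique $A$ with no further edges, i.e. it is a clique-sum of chordal graphs and is therefore chordal.

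For the \emph{only if} direction I argue by contraposition, exhibiting an induced cycle whenever the conditions fail. If $|G|$ has three prime divisors $p,q,r$, commuting elements of those orders generate a copy of $C_{pqr}$, and the six elements of orders $p,pq,q,qr,r,rp$ induce a $6$-cycle. Assume now exactly two primes, $G=P\times Q$. If both $P$ and $Q$ contain an element of prime-square order, say $u$ of order $p^{2}$ and $v$ of order $q^{2}$, then $\{u^{p},\,u^{p}v,\,v^{q},\,uv^{q}\}$ (orders $p,pq^{2},q,p^{2}q$) induces a $4$-cycle. If both Sylow subgroups are non-cyclic of prime exponent, choosing independent $a_{1},a_{2}$ of order $p$ and $b_{1},b_{2}$ of order $q$ makes $a_{1},a_{1}b_{1},b_{1},a_{2}b_{1},a_{2},a_{2}b_{2},b_{2},a_{1}b_{2}$ an induced $8$-cycle. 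The remaining bad case is $Q$ of prime exponent (or cyclic) while $P$ is non-cyclic with an element $u$ of order $p^{2}$: if $P$ has a second cyclic subgroup $\langle u'\rangle$ of order $p^{2}$ meeting $\langle u\rangle$ in its subgroup of order $p$, then for any order-$q$ element $b$ the set $\{ub,\,u^{p},\,u'b,\,b\}$ induces a $4$-cycle. These cases are exhaustive once one checks that failing the hypothesis always lands in one of them.

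The main obstacle is exactly this last case: one must show that the relevant non-cyclic $p$-groups of exponent larger than $p$ genuinely contain the needed configuration of cyclic subgroups, and here the fine structure of $p$-groups intervenes. A non-cyclic $p$-group need not have two subgroups of order $p$ (the generalized quaternion groups being the exception), nor two cyclic subgroups of order $p^{2}$ sharing their order-$p$ subgroup; the contrast between the quaternion groups (which do, and yield the $4$-cycle above via the unique involution) and the dihedral-type groups with a unique cyclic subgroup of order $p^{2}$ is precisely where the argument is most delicate. I would therefore treat the generalized quaternion family at $p=2$ separately, and for the remaining non-cyclic cases pass to an abelian subquotient containing $C_{p^{2}}\times C_{p}$ to locate the required pair of order-$p^{2}$ subgroups, keeping the cyclic-subgroup poset $\mathcal L$ and its crowns as the bookkeeping device throughout; I expect the bulk of the careful casework to concentrate here.
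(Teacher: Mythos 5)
Your framework and most of your casework are correct: adjacency in a nilpotent group is coordinatewise comparability of cyclic subgroups, the power graph is a (blown-up) comparability graph so only even induced cycles need to be excluded, the clique-sum argument for the ``if'' direction is valid, and your three cycles in the contrapositive --- the $C_6$ from three primes, the $C_4$ from elements of order $p^{2}$ and $q^{2}$, and the $C_8$ when both Sylow subgroups are non-cyclic of prime exponent --- are all genuinely induced (the last one is exactly the $8$-cycle this paper uses in Theorems 3.3 and 3.4), as is the quaternion-type $C_4$ built from two cyclic subgroups of order $p^{2}$ sharing their subgroup of order $p$.

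The genuine gap is the final case you yourself flag, and it is not merely delicate: it cannot be closed by your proposed route, or indeed at all. Your $C_4$ there needs two distinct cyclic subgroups of order $p^{2}$ meeting in a common $C_p$, and your repair --- pass to an abelian subquotient containing $C_{p^{2}}\times C_{p}$ --- fails twice over. First, a dihedral $2$-group has no such section: every subgroup and every quotient of $D_{2^{n}}$ is cyclic or dihedral, so no subquotient is isomorphic to $C_{4}\times C_{2}$. Second, even if a quotient contained the configuration, chordality of power graphs is inherited by subgroups (induced subgraphs) but is not reflected from quotients, so a cycle in the power graph of a quotient proves nothing about $P(G)$. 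Worse, the obstruction is real: take $P=D_{4}$ (dihedral of order $8$) and $Q=C_{q}$. Distinct nontrivial cyclic subgroups of $D_{4}$ intersect trivially, and by your own transitive-orientation argument an induced $C_4$ in $P(D_4\times C_q)$ would force two incomparable cyclic subgroups whose intersection has order divisible by $2q$ --- impossible here; moreover one checks directly that the ordering ``reflections $(s_i,1)$; then the $(s_i,b^{j})$; then the $(1,b^{j})$; then the $(r^{2},b^{j})$; then $r^{2}$; then $(r^{\pm1},1)$; then the generators of $\langle r\rangle\times C_q$; then the identity'' is a perfect elimination ordering, so by Theorem~\ref{Chordal_pre_th7} the graph $P(D_{4}\times C_{q})$ is chordal even though $D_{4}$ is neither cyclic nor of prime exponent. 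So your contraposition cannot be completed in the dihedral subcase because the ``only if'' direction, as quoted, collides with this example; you should consult the precise statement and proof in \cite{Manna} rather than attempt to force the missing $C_4$ (note that this same issue propagates to the way the quoted theorem is applied to $C_{3}\times D_{4}$ and $C_{5}\times D_{4}$ in Section 6, whereas the companion claims for $C_{3}\times Q_{8}$ and $C_{5}\times Q_{8}$ are sound, exactly as your quaternion analysis predicts).
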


\begin{corollary}
\label{Chordal_pre_cor1}
$\mathcal{P}(C_{n})$ is chordal if and only $n$ is either a prime power or a product of a prime and a prime power.
\end{corollary}

\begin{corollary}
\label{Chordal_pre_cor2}
$\mathcal{P}(D_{n})$ is chordal if and only $n$ is either a prime power or a product of a prime and a prime power.
\end{corollary}

\begin{proof} 
As $\mathcal{P}^*(D_n)$ contains $n$ isolated vertices along with a component isomorphic to $\mathcal{P}^*(C_n)$, so $\mathcal{P}(D_n)$ is chordal if and only if $\mathcal{P}(C_n)$ is chordal. Hence the result follows.
\end{proof}
\medskip

A group $G$ is called an \textit{EPPO} group if every non-identity element of $G$ is of prime power order, and $G$ is called an \textit{EPO} group if every non-identity element of $G$ is of prime order. If $G$ is a finite group, then the Gruenberg-Kegel graph (or GK graph or prime graph) is a graph whose vertices are distinct prime divisors of $|G|$, and two distinct vertices $p$ and $q$ are joined by an edge if $G$ contains an element of order $pq$. Now we recall some results that connect EPPO groups with the corresponding GK graph and power graph (see \cite{Cameron, Ma,Manna}).
We define the enhanced power graph of a group before moving on to the next theorems.

Let $G$ be a group. The enhanced power graph of $G$ is the graph whose vertex set is $G$ and two distinct vertices $x, y$ are adjacent if they belong to the same cyclic subgroup (i.e., $\langle x, y \rangle$ is a cyclic group).

\begin{theorem}
\cite{Cameron}
\label{Chordal_pre_th2}
For a finite group $G$, the following conditions are equivalent:
\begin{enumerate}
    \item $G$ is an EPPO group.
    \item The GK graph of $G$ is a null graph.
    \item $\mathcal{P}(G)$ is equal to the enhanced power graph of $G$.
\end{enumerate}
\end{theorem}

\begin{theorem}
\cite{Ma}
\label{Chordal_pre_th3}
The enhanced power graph of an EPPO group  is chordal.
\end{theorem}

Thus, as a consequence of above two theorems, we obtain the following result 
\begin{corollary}
\label{Chordal_pre_cor4}
If $G$ is an EPPO group then following statements hold.
 \begin{enumerate}
\item The  GK graph of $G$ is a null graph.
\item The power graph of $G$ is a chordal graph.
 \end{enumerate}
\end{corollary}

\begin{example}
If $G$ is the group $C_{p} \rtimes C_{q^{m}}$, where $p$ and $q$ are primes with $p>q$ and $q^{m}|(p-1)$, then the power graph of $G$ is a chordal graph because $G$ does not contain any element of order $pq$. So, it is an EPPO group, and its GK graph is a null graph.
\end{example}


\section{Direct product of two groups}
\label{direct products}
In this section, we describe all finite groups $G$ and $H$ for which the power graph of $G \times H$ is chordal. If both $|G|$ and $|H|$ are powers of the same prime, then $G\times H$ is a $p$-group, and hence its power graph is a chordal graph by Theorem \ref{Chordal_pre_th1}. In Subsection 3.1, we consider the scenario where the order of $G\times H$ has exactly two distinct prime divisors, while in Subsection 3.2, we consider the case where $|G\times H|$ has three or more distinct prime divisors. We start by presenting a few introductory lemmas.

\begin{lemma}
\label{Cordal_product_lm2}
Let $p$ be a prime and $H$ be a finite group whose every non-identity element has order either a power of $p$ or a prime other than $p$.  Then $\mathcal{P}(C_{p} \times H)$ is a chordal graph.
\end{lemma}

\begin{proof}
For the sake of contradiction, let $\cdots\sim x_1\sim x_2\sim x_3\sim x_4\sim \cdots$ be an induced cycle of length $\geq 6$. Without loss of generality, we assume that $\cdots\leftarrow x_{1}\rightarrow x_{2}\leftarrow x_{3}\rightarrow x_{4}\leftarrow\cdots$ in
$\overrightarrow{\mathcal{P}}(C_{p} \times H)$. Let $x_{i}=(g_{i}, h_{i})$, where $g_i\in C_{p}$, $h_i\in H$.
If $o(g_3)=1$, then clearly $x_{2}\sim x_{4}$, so $o(g_3)\neq1$; and for a similar reason $o(h_3)\neq1$. Thus we must have $o(g_3)=p$. By assumption, $o(h_3)$ is either a power of $p$ or a prime $q\neq p$. We consider the following cases.\medskip\\
\textbf{Case I.} Let $o(g_{3})=p$ and $o(h_3)=p^{k}$. Then $o(g_2), o(g_4)\in\{1,p\}$ and $o(h_2), o(h_4)\in\{1,p,p^2,\ldots,p^k\}$. This implies that orders of $x_2,x_3$ and $x_4$ are  powers of $p$. Now, since $x_2$ and $x_4$ are elements of the same cyclic $p$-group $\langle x_3\rangle$, so $x_2$, $x_3$ and $x_4$ form a triangle, which is a contradiction.\medskip\\
\textbf{Case II.} Let $o(g_3)=p$ and $o(h_3)=p_{j}$ for some $p_j\neq p$.
 If $o(g_2)=p=o(g_4)$ and $o(h_2)=p_j= o(h_4)$, then the vertices $x_2$, $x_3$ and $x_4$ form a triangle in the power graph. So without loss of generality, we take $o(g_2)=p$, $o(h_2)=1$, and $o(g_4)=1$, $o(h_4)=p_{j}$. Then we obtain $x_{3}\sim x_{5}$, which gives a contradiction.

Thus, $\mathcal{P}(C_p\times H)$ does not contain an induced cycle of length $\geq 6$. A similar argument runs for cycles of length 4. Hence $\mathcal{P}(C_{p}\times H)$  is a chordal graph.
\end{proof}
By using a similar argument, one can prove the following result:
\begin{lemma}
\label{Cordal_product_lm3}
Let $H$ be a finite group in which every non-identity element has prime order. Then $\mathcal{P}(C_{p^{\alpha}}\times H)$ is power-chordal.
\end{lemma}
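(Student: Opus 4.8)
The plan is to argue by contradiction, mirroring the proof of Lemma~\ref{Cordal_product_lm2}. Suppose $P(C_{p^{\alpha}}\times H)$ contains an induced cycle of length $\geq 4$, and write its vertices as $x_i=(g_i,h_i)$ with $g_i\in C_{p^{\alpha}}$ and $h_i\in H$, so that $o(g_i)$ is a power of $p$ and, by the hypothesis on $H$, $o(h_i)$ is either $1$ or a prime. Orienting each edge of the cycle by the domination relation of $\overrightarrow{P}(C_{p^{\alpha}}\times H)$ (an arc from $u$ to $v$ when $\langle u\rangle\subseteq\langle v\rangle$), and choosing a vertex maximal among the cycle vertices with respect to $\langle\cdot\rangle$-inclusion, I obtain a vertex $x_3$ whose two cycle-neighbours $x_2,x_4$ both lie in $\langle x_3\rangle$; since the cycle is induced, $x_2\not\sim x_4$.

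First I would dispose of the easy cases for $x_3$. If $h_3=1$, or if $o(h_3)=p$, then $o(x_3)$ is a power of $p$, so $\langle x_3\rangle$ is a cyclic $p$-group whose subgroups form a chain and whose power graph is therefore complete; this forces $x_2\sim x_4$, a contradiction. Hence $o(h_3)=q$ for some prime $q\neq p$, and $\langle x_3\rangle=\langle g_3\rangle\times\langle h_3\rangle\cong C_{p^{k}q}$, where $p^{k}=o(g_3)$. In this cyclic group two elements are adjacent precisely when one of their orders divides the other, so $x_2\not\sim x_4$ forces their orders to be incomparable divisors of $p^{k}q$; the only such incomparable pairs have the shape $\{p^{a},\,p^{b}q\}$ with $b<a$. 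Relabelling if necessary, I may assume $o(x_4)=p^{b}q$, and since $o(g_4)$ is a power of $p$ and $q\neq p$, the factor $q$ must come from the $H$-coordinate, forcing $o(h_4)=q$ and $o(g_4)=p^{b}$.

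The heart of the argument is the vertex $x_5$ following $x_4$ on the cycle, which is non-adjacent to $x_3$. If $x_5\in\langle x_4\rangle$, then $x_5\in\langle x_4\rangle\subseteq\langle x_3\rangle$, giving $x_5\sim x_3$, impossible. Hence $x_4\in\langle x_5\rangle$, so $p^{b}q\mid o(x_5)$; arguing as before, the factor $q$ forces $o(h_5)=q$, whence $\langle h_4\rangle=\langle h_5\rangle$, and since $x_4\in\langle x_3\rangle$ we also get $\langle h_3\rangle=\langle h_4\rangle$. Writing $o(g_5)=p^{c}$, both $\langle x_3\rangle$ and $\langle x_5\rangle$ split as $\langle g_i\rangle\times\langle h_i\rangle$ sharing the same factor $\langle h_3\rangle=\langle h_5\rangle\cong C_q$; as the $p$-power subgroups $\langle g_3\rangle,\langle g_5\rangle$ of the cyclic group $C_{p^{\alpha}}$ are comparable, one of $\langle x_3\rangle,\langle x_5\rangle$ contains the other, yielding $x_3\sim x_5$. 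This contradiction rules out induced cycles of length $\geq 6$, and the same computation, with $x_5$ replaced by the remaining neighbour $x_1$ of $x_4$, rules out $4$-cycles. I expect the main obstacle to lie in the bookkeeping of this final step: one must verify that, in the surviving case $o(h_3)=q$, the $H$-coordinate of every vertex involved is pinned to order exactly $q$, which is precisely where the hypothesis that all non-identity elements of $H$ have prime order is essential; the extra room afforded by the larger cyclic factor $C_{p^{\alpha}}$ rather than $C_p$ is harmless exactly because $p$-power orders form a chain and hence always correspond to adjacent vertices.
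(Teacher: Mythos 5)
Your proof is correct and follows essentially the same route as the paper, which proves this lemma by appealing to the cycle-chasing argument of Lemma~\ref{Cordal_product_lm2}: assume an induced cycle, pin down the coordinate orders of a dominating vertex and its neighbours, and exhibit a chord, with the prime-order hypothesis on $H$ forcing $o(h_5)=q$ exactly as you identify. If anything, your version is tighter than the paper's template: choosing $x_3$ maximal with respect to $\langle\cdot\rangle$-inclusion rigorously justifies the orientation pattern that the paper assumes ``without loss of generality,'' and your explicit divisor analysis in $C_{p^k q}$ plus the substitution of $x_1$ for $x_5$ cleanly covers the $4$-cycle case the paper treats only in passing.
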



\subsection{Direct product of $G$ and $H$, where $|G\times H|$ has exactly two distinct prime divisors}
We are now ready to identify the finite groups $G$ and $H$, with $|G\times H|$ having exactly two distinct prime divisors, such that $\mathcal{P}(G\times H)$ is a chordal graph. We observe that if $G$ is a $p$-group and $H$ is a $q$-group, then $\mathcal{P}(G\times H)$ is chordal if and only if one of the groups is cyclic and the other has a prime exponent. In the next few theorems, we consider that at least one of $G$ or $H$ is not nilpotent. We use the Fitting subgroup of $G\times H$ to examine the chordality of $\mathcal{P}(G\times H)$. The Fitting subgroup of a group $G$, denoted by $\mathrm{Fit}(G)$, is the unique largest normal nilpotent subgroup. Let $G$ be a finite group and $p$ be a prime divisor of $|G|$. For any prime divisor $p$ of $|G|$, we use $O_{p}(G)$ to denote the largest normal $p$-subgroup of $G$. The following theorem is the main result of this section.

\begin{theorem}
\label{Chordal_product_th}
Let $G, H$ be two groups such that $|G\times H|$ has exactly two distinct prime divisors  {$p$ and $q$}. Then $\mathcal{P}(G\times H)$ is chordal if and only if {one of the following holds.}
\begin{itemize}
\item[(i)] {One} of $G$ and $H$ (say $G$) is the cyclic group $C_{p^{\alpha}}$ with $\alpha \geq 1$ and the other (say $H$) has one of the following structures {for some $r, s, m ,n , k \geq 1$}:
\begin{enumerate}
\item
$O_{p}(H)\rtimes C_{q^s}$ or $C_{q^s}\rtimes O_{p}(H)$.
\item
$Q\rtimes C_{p^r}$ or $C_{p^r}\rtimes Q$, where $Q$ is a $q$-group of exponent $q$.
\item
 $(O_{p}(H)\times O_{q}(H))\rtimes C_{p^m}$ or $C_{p^m}\rtimes (O_{p}(H)\times O_{q}(H))$.
 \item
  $(O_{p}(H)\times O_{q}(H))\rtimes C_{q^n}$ or $C_{q^n}\rtimes (O_{p}(H)\times O_{q}(H)$.  \\
For (c) and (d), either $O_{p}(H)$ is a $p$-group of exponent $p$ and $O_{q}(H) \cong C_{q^k}$, or $O_{q}(H)$ is a non-cyclic $q$-group of exponent $q$ and $O_{p}(H)$ is a cyclic $p$-group. \medskip 

  Moreover, the following two additional possibilities exist for $H$ if $\alpha=1:$
\item 
$C_{q^{s}}\rtimes C_{p^r}$ or $C_{p^r}\rtimes C_{q^{s}}$.
\item
$(O_{p}(H)\times C_{q^{s}})\rtimes C_{p^m}$ or $C_{p^m}\rtimes (O_{p}(H)\times C_{q^{s}})$,where $O_{p}(H)$ is a $p$-group of exponent $p$.
\end{enumerate}
\item[(ii)] $G\cong C_{pq}$ and $H$ is an EPO group.
\item[(iii)] {One} of $G$ and  $H$ is a non-cyclic $p$-group of exponent $p$ and the  other is one of the groups: $C_{q^s}\rtimes C_{p^r}$, $C_{p^r}\times C_{q}$, $C_{p}\times C_{q^s}$ ($r, s \geq 1$).
 \item[(iv)] {$G$ and $H$} must satisfies {(1) or (2) below}.\\
(1) {Both} $G$ and $H$ are EPPO groups such that there exists some prime (say, $p$) dividing both $|G|$ and $|H|$ with Sylow $p$-subgroups (of both {$G$ and $H$}) are cyclic as well as normal, and $G$ and $H$ have either of the following special form: 
\begin{enumerate}
\item
 At least one of $G$ and $H$ is an EPO group.
\item
 Both $G$ and $H$  have elements of orders either $q$ or power of $p$.
\end{enumerate}
(2) $G$ is an EPPO group and $H$ is a non-EPPO group such that they have the following forms:\\
(x) $G\cong C_{P}\rtimes Q$ and $H \cong C_{pq}\rtimes C_{q^{s-1}}$;\\
(y) $G \cong C_{q}\rtimes P$ and $H\cong C_{pq}\rtimes C_{p^{r-1}}$; \\
(z) $G \cong C_{p}\rtimes C_{q}$ and $H\cong C_{pq}\rtimes C_{p^{r-1}}$ or $G\cong C_{q}\rtimes C_{p}$ and $H\cong C_{pq}\rtimes C_{q^{s-1}}$.
Here, $r, s \geq 2$, and $P, Q$ are the non-cyclic $p$-group of exponent $p$ and the non-cyclic $q$-group of exponent $q$ respectively.
\end{itemize}
\end{theorem}

\medskip

We prove this theorem by proving the following lemmas:
\begin{lemma}
\label{Chordal_product_th1}
{Let $p$ and $ q$ be two primes. Let $G$ be the cyclic group $C_{p^{\alpha}}$ with $\alpha \geq 1$ and $H$ be a non-nilpotent group of order $p^{r}q^{s}$ ($r, s \geq 1$)}. Then $\mathcal{P}(G\times H)$ is chordal if and only if $H$ has one of the structures described in Theorem \ref{Chordal_product_th}(i) (a-f).
\end{lemma}

\begin{proof}
{Suppose $\mathcal{P}(G\times H)$ is chordal. Let $P_{H}$ and $Q_{H}$ be  Sylow $p$- and 	 Sylow $q$-subgroups of $H$, respectively}. Here we consider two cases based on the value of $\alpha$.\medskip\\
\textbf{Case 1.} $\alpha >1$. \\
{Here} $G\times Q_{H}$ is nilpotent and power-chordal. Thus, by Theorem \ref{Chordal_pre_th5}, $Q_{H}$ is a $q$-group of exponent $q$ (since $G$ is cyclic). Now, consider the Fitting subgroup $\mathrm{Fit}(H)$ of $H$.

 If $\mathrm{Fit}(H)$ is a $p$-group, then $\mathrm{Fit}(H)\cong O_{p}(H)$. Thus, either $H\cong (O_{p}(H)\rtimes C_{q^s})$ or $H\cong C_{q^s}\rtimes O_{p}(H)$. 
 
 Similarly, if $\mathrm{Fit}(H)$ is a $q$-group, then we obtain that $H\cong Q\rtimes C_{p^r}$ or $H\cong C_{p^r}\rtimes Q$, where $Q$ is a $q$-group of exponent $q$.
 
If $pq$ divides $|\mathrm{Fit}(H)|$, then $\mathrm{Fit}(H)\cong (O_{p}(H)\times O_{q}(H))$. As $\mathrm{Fit}(H)$ is nilpotent and power-chordal, so $H$ must be one form (i)(a), (i)(b), (i)(c), or (i)(d) of Theorem \ref{Chordal_product_th}. \medskip\\
\textbf{Case 2.} $\alpha =1$.\\ 
Since $G\times Q_{H}$ is nilpotent and power-chordal, Theorem \ref{Chordal_pre_th5} gives that $Q_{H}$ is either a $q$-group of exponent $q$ or $C_{q^{s}}$.
If $Q_{H}$ is a $q$-group of exponent $q$, then we get the structure of $H$ as any one of (i)(a), (i)(b), (i)(c), or (i)(d) of Theorem \ref{Chordal_product_th}. 

Next, suppose that $Q_{H}\cong C_{q^{s}}$. If $\mathrm{Fit}(H)$ is a $p$-group then by a similar approach as in Case 1, we get any one of the structures for $H$ as (i)(a) in Theorem \ref{Chordal_product_th}.
 
 If $\mathrm{Fit}(H)$ is a $q$-group, then $\mathrm{Fit}(H)\cong C_{q^{s}}$ and hence $H \cong C_{q^{s}}\rtimes C_{p^r}$ or $H\cong C_{p^r}\rtimes C_{q^s}$. This gives the structure of $H$ as  (i)(e) in Theorem \ref{Chordal_product_th}.

 Finally, if $pq$ divides $|\mathrm{Fit}(H)|$, then $\mathrm{Fit}(H)\cong (O_{p}(H)\times C_{q^{s}})$. Therefore $H\cong (O_{p}(H)\times C_{q^{s}})\rtimes C_{p^m}$ or $C_{p^m}\rtimes (O_{p}(H)\times C_{q^s})$. Since $\mathrm{Fit}(H)$ is nilpotent and power-chordal, so $O_{p}(H)$ must be a $p$-group of exponent $p$ (by Theorem \ref{Chordal_pre_th5}).\medskip

To prove the converse part of the theorem, we consider the following cases.\medskip\\
\textbf{Case I.} Let $H$ be a group of the form  (i)(a) of Theorem \ref{Chordal_product_th}. Now, if $H$ is an EPPO group, then every non-identity element of $H$ is of order either a power of $q$ or a power of $p$. Let $\cdots\leftarrow x_{1}\rightarrow x_{2}\leftarrow x_{3}\rightarrow x_{4}\leftarrow\cdots$ be a cycle in $\overrightarrow{\mathcal{P}}(C_{p^{\alpha}} \times H)$, where $x_{i}=(g_{i}, h_{i})$ (say). If $o(g_{3})=1$, then $x_{2}\sim x_{4}$; so $o(g_{3})\neq 1$. For a similar reason $o(h_{3})\neq 1$. Let $g_{3}=a$, $h_{3}=b$ where $o(b)$ is a power of $p$. Then $x_{2}\sim x_{4}$, i.e., $x_{2}, x_{3}, x_{4}$ form a triangle. Now, if we take $g_{3}=a$ and $h_{3}=b^q$ with $o(b)=q^k$, then by a similar approach as in Lemma \ref{Cordal_product_lm2}, we obtain either $x_{1}, x_{2}, x_{3}$ or $x_{3}, x_{4}, x_{5}$ form a triangle. Thus, in this case, $\mathcal{P}(C_{p^{\alpha}}\times H)$ is chordal. On the other hand, if $H$ is not an EPPO group, then it contains an element, say $h\in H$, of order $pq$. Without loss of generality, we assume that $g_{3}=a$, $h_{3}=h^{p}$ where $o(a)=p$, then either $g_{2}=1$, $h_{2}=h^{p^2}$, $g_{4}=a^{q}$, $h_{4}=1$ or $g_{2}=a^{q}, h_{2}=1$, $g_{4}=1, h_{4}=h^{p^2}$. So either $h_{1}\in \langle h\rangle$ or $h_{5}\in \langle h\rangle$. As $G$ is cyclic, we conclude that either $x_1\sim x_3$ or $x_{3}\sim x_{5}$. Hence, $\mathcal{P}(G\times H)$ is a chordal graph.\medskip\\
\textbf{Case II. } Let $H$ be any one of the form Theorem \ref{Chordal_product_th} (i)(b). Then, by a similar approach as in Case I, we can prove that $\mathcal{P}(G\times H)$ is chordal.\medskip\\
\textbf{Case III. } {If $H$ is any of the groups in} (i)(c) or (i)(d) in Theorem \ref{Chordal_product_th}, then every non-identity element of $H$ is of order either $p^{k}q^{m}$ or $p^s$ or $q^n$ for some $k, m,  s, n\geq 1$. Let $\cdots x_{1}\rightarrow x_{2}\leftarrow x_{3}\rightarrow x_{4}\cdots$ be a cycle in $\overrightarrow{\mathcal{P}}(C_{p^{\alpha}}\times H)$, where $x_{i}=(g_i, h_i)$.

Let $g_{3}=a$, $h_{3}=b$. If both $o(a)$ and $o(b)$ are powers of the same prime, then $x_{2}\sim x_{4}$. Suppose $o(a)=p$ and $o(b)=q^n$. Let $g_{3}=a$, $h_{3}=b$. Then, we must have either  $g_{2}=1, h_{2}=b^{p}$, $g_{4}=a^{q^n}, h_{4}=1$ or $g_{2}=a^{q^n}, h_{2}=1$, $g_{4}=1, h_{4}=b^{p}$. Then, either $h_{1}\in \langle b\rangle$ or $h_{5}\in \langle b\rangle$. Now, since $G\cong C_{p^\alpha}$,  either $x_{1}\sim x_{3}$ or $x_{5}\sim x_{3}$. 

Next, we assume that $o(a)=p$ and $o(b)=p^{k}q^{m}$. Let $g_{3}=a$, $h_{3}=b^{p^k}$, then again we get either $x_{3}\sim x_{5}$ or $x_{1}\sim x_{3}$. This proves that $\mathcal{P}(C_{p^\alpha}\times H)$ is a chordal graph.\medskip\\
\textbf{Case IV.} If $H$ is a group of the form as in  Theorem \ref{Chordal_product_th} (i)(e), then proceeding as in Case I, we can prove that $\mathcal{P}(G\times H)$ is a chordal graph.\medskip\\
\textbf{Case V.} Let $H$ be of the form  as (i)(f) in Theorem \ref{Chordal_product_th}. Then, by following an approach similar to Case III, one can easily prove that $\mathcal{P}(G\times H)$ is a chordal graph. 
This completes the proof of the theorem.
\end{proof}

\begin{lemma}
\label{Chordal_product_th2}
Let $G$ and $H$ be two finite groups with $|G\times H|$ has exactly two distinct prime divisors $p$ and $q$, and {suppose} $pq$ divides both $|G|$ and $|H|$. If $G$ is cyclic, then $\mathcal{P}(G\times H)$ is a chordal graph if and only if $G\cong C_{pq}$ and $H$ is an EPO group.
\end{lemma}

\begin{proof}
Let $G \cong C_{p^{\alpha}q^{\beta}}$ and $|H|=p^{k}q^{m}$ where $ k,m, \alpha, \beta \geq 1$. Let $P_1$, $Q_1$, and $P_2$, $Q_2$ be Sylow $p$- and Sylow $q$-subgroups of $G$ and $H$, respectively. Now, $G \times P_{2}$ is nilpotent (as $G$ is cyclic and $P_{2}$ is a $p$-group); so, by Theorem \ref{Chordal_pre_th5}, $\alpha=1$ and $P_2$ is a $p$-group of exponent $p$. On the other hand, since $G \times Q_{2}$ is nilpotent, $\beta =1$ and $Q_2$ is a $q$-group of exponent $q$. Therefore, $G \cong C_{pq}$.
Also, we observe that $H$ must be an EPPO group, or else $G\times H$ contains the non-power-chordal subgroup $C_{pq}\times C_{pq}$. Therefore, $H$ is an EPO group. \medskip

{For the converse, suppose $G$ and $H$ have the given structure}. Let $\cdots\leftarrow x_{1}\rightarrow x_{2}\leftarrow x_{3}\rightarrow x_{4}\leftarrow\cdots$ be a cycle  in 
$\overrightarrow{\mathcal{P}}(G \times H)$, where $x_{i}=(g_{i}, h_{i})$ (say). Let $g_{3}=a$, $h_{3}=b$, where  $o(a)\in \{1, p, q, pq\}$ and $o(h_{3})\in \{1, p, q\}$.

If $o(a)=1$, then $x_{2}\sim x_{4}$. Next, let $o(b)=1$ and $o(a)\neq 1$. Without loss of generality, we consider $g_{2}=c^{p}, h_{2}=1$, $g_{4}=c^{q}, h_{4}=1$. So an induced {cycle of length at least $4$ is not possible} in $\mathcal{P}(G\times H)$.

Next, suppose that $o(a)\neq 1$ and $o(b)=p$. In this case, $o(a)\in \{p, q, pq\}$; but in every situation we obtain either $x_{3}\sim x_{5}$ or $x_{1}\sim x_{3}$ or $x_{2}\sim x_{4}$. The same argument works when $o(a)\neq 1$ and $o(b)=q$. Thus, $\mathcal{P}(G\times H)$ does not induce a cycle of length $4$ or more. Hence, $\mathcal{P}(G\times H)$ is a chordal graph.
\end{proof}

\begin{lemma}
\label{Chordal_product_th3}
Let $G$ and $H$ be two finite non-cyclic groups such that  $|G\times H|$ has {exactly two prime divisors $p$ and $q$}. If $G$ is a $p$-group, then $\mathcal{P}(G\times H)$ is a chordal graph if and only if $G$ and $H$ have the forms described in Theorem \ref{Chordal_product_th} (iii).
\end{lemma}

\begin{proof}
First, suppose that $\mathcal{P}(G\times H)$ is a chordal graph and let $|H|=p^{r}q^{s}$, where $r, s \geq 1$. Let $Q$ be a Sylow $q$-subgroup of $H$. Here, $G\times Q$ is nilpotent and power-chordal. Now, as $G$ is non-cyclic, so $Q$ must be a cyclic group, and $G$ must be a $p$-group of exponent $p$ (by Theorem \ref{Chordal_pre_th5}). Let $P$ be a Sylow $p$-subgroup of $H$ and  $O_{p}(H)$ be the largest normal $p$-subgroup of $H$. By Bunside's theorem, $H$ is a solvable group. Now consider the Fitting subgroup $\mathrm{Fit}(H)$ of $H$.\medskip\\
\textbf{Case 1.} {$\mathrm{Fit}(H)$ be a $q$-group}.\\
{Here $\mathrm{Fit}(H)=C_{q^{k}}$ for some $k\leq s$ since the Sylow $q$-subgroup of $H$ is cyclic}. We claim that $Q$ is normal in $H$. If $Q$ is not normal in $H$, then $H$ contains non-adjacent elements, say $c$ and $d$, of order powers of $q$. On the other hand, as $G$ is non-cyclic, $G$ contains non-adjacent elements, say $a$ and $b$, of order $p$. Then $(a^q, 1)\sim (a, c)\sim (1, c^p)\sim (b, c)\sim (b^q, 1)\sim (b,d)\sim (1, d^p)\sim (a, d)\sim (a^q, 1)$ is an $8$-cycle in $\mathcal{P}(G\times H)$. This contradicts the chordality of $\mathcal{P}(G\times H)$. Therefore, either $H\cong C_{q^{s}}\rtimes C_{p^r}$ or $H\cong C_{p^r}\rtimes C_{q^s}$.\medskip\\
\textbf{Case 2.} {$\mathrm{Fit}(H)$ is a $p$-group.\\ Here $\mathrm{Fit}(H)=O_{p}(H)$.}
Then $H$ is either $O_{p}(H) \rtimes C_{q^s}$ or $C_{q^s}\rtimes O_{p}(H)$, where $O_{p}(H)$ is not necessarily a group of exponent $p$. Now, consider the group $O_{p}(H) \rtimes C_{q^s}$. We claim that $O_{p}(H)$ is a cyclic group. If  $O_{p}(H)$ is non-cyclic, then $H$ contains at least two non-adjacent elements of order $q$ or $pq$. Then as $G$ is non-cyclic, $\mathcal{P}(G\times H)$ contains a cycle of length more than $4$, which is a contradiction to the chordality of  $\mathcal{P}(G\times H)$. Thus, in this case, $H$ is either $C_{p^r}\rtimes C_{q^s}$ or $C_{q^s}\rtimes C_{p^r}$, for some $r, s\geq 1$. Also, if  $H\cong C_{p^r}\rtimes C_{q^s}$ then Sylow $q$-subgroup of $H$ must be normal, or else $\mathcal{P}(G\times H)$ contains a cycle of length more than $4$. Thus, $H\cong C_{p^{r}}\times C_{q^{s}}$. Since $H$ is nilpotent and power-chordal so according to Theorem \ref{Chordal_pre_th5} either $r=1$ or $s=1$, i.e., in this case $H$ is  either $C_{p^r}\times C_{q}$ or $C_{p}\times C_{q^s}$. 
\medskip\\
\textbf{Case 3.} {$pq$ divides $|\mathrm{Fit}(H)|$}. \\
In that case, $\mathrm{Fit}(H)=O_{p}(H)\times O_{q}(H)$, and hence $H$ is either $(O_{p}(H)\times C_{q^{k}}) \rtimes C_{q}$ with $O_{p}(H)$ being a $p$-group of exponent $p$ or $(O_{p}(H)\times C_{q^{s}})\rtimes C_{p}$. Now, as the Sylow $q$-subgroups of $H$ are cyclic, the Sylow $q$-subgroups of $\mathrm{Fit}(H)$ are also cyclic. Thus, in the second form of $H$, $O_{p}(H)$ must be a $p$-group of exponent $p$.
Also, we get two additional possibilities for $H$, namely, $C_{q}\rtimes (O_{p}(H)\times C_{q^{k}})$ or $C_{p}\rtimes (O_{p}(H)\times C_{q^s})$ where $O_{p}(H)$ are $p$-group of exponent $p$.

However, in each of the above four {possibilities}, $H$ contains non-adjacent elements, say $c_{1}$ and $c_{2}$, of order $pq$. Then, considering two non-adjacent elements $g_1$ and $g_2$ of $G$, we get an 8-cycle $(g_{1}^{q}, 1)\sim (g_1, c_{1}^{p})\sim (1, c_{1}^{p^2})\sim (g_2, c_{1}^{p})\sim (g_{2}^{q}, 1)\sim (g_2, c_{2}^{p})\sim (1, c_{2}^{p^2})\sim (g_1, c_{2}^{p})\sim (g_{1}^{q}, 1)$ in $\mathcal{P}(G\times H)$, which contradicts the chordality of $\mathcal{P}(G\times H)$. So, Case 3 can not occur.

Hence, $G$ and  $H$ have the structures as in Theorem \ref{Chordal_product_th} (iii).\medskip

We now prove the converse of the theorem. First, we assume that $G$ is a finite non-cyclic $p$-group of exponent $p$ and $H\cong (C_{q^s}\rtimes C_{p^r})$. \medskip\\
\textbf{Case I.} {$H$ is an EPPO group}.\\
{Here every element} of $H$ is either a power of $p$ or a power of $q$. Consider a cycle $\cdots x_{1}\rightarrow x_{2}\leftarrow x_{3}\rightarrow x_{4}\leftarrow x_{5}\rightarrow\cdots$ in $\overrightarrow{\mathcal{P}}(G\times H)$, where $x_{i}=(g_i, h_i)$. Suppose that $g_3=a$, $h_3=b$ with $o(a)=p$, $o(b)=q^{n}$. Without loss of generality, we choose $g_2=a^{q^n}$, $ h_2=1$ and $g_4=1$, $h_4=b^{p}$. Then $g_1\in \langle a\rangle$. Also, since the Sylow $q$-subgroup of $H$ is cyclic and normal, $h_1\sim h_3$. Hence $x_1\sim x_3$.
Again, if we take $o(a)=p$ and $o(b)=p^{s}$, then $x_{2}\sim x_{4}$. Thus, $\mathcal{P}(G\times H)$ is chordal.\medskip\\
\textbf{Case II.} {$H$ is not an EPPO group.} \\
In that case,  every non-identity element of $H$ is of order either a power of $p$ or a power of $q$ or of the form $p^{i}q^{j}$.  Consider the cycle $\cdots x_{1}\rightarrow x_{2}\leftarrow x_{3}\rightarrow x_{4}\leftarrow x_{5}\rightarrow\cdots$ in $\overrightarrow{\mathcal{P}}(G\times H)$, where $x_{i}=(g_i, h_i)$. Here arise two subcases.\medskip\\
\textbf{Subcase a.} If $g_{3}=a$, $h_{3}=b$ with $o(a)=p$, $o(b)=p^{n}$, then $x_{2}\sim x_{4}$.\medskip\\
\textbf{Subcase b.} Let $g_{3}=a$, $h_{3}=b$ with $o(a)=p$, $o(b)=q^{n}$. Without loss of generality, we can choose $g_{2}=a^{q^n}$, $h_{2}=1$, $g_{4}=1$ and $h_{4}=b^{p}$. Now $g_{1}\in \langle a\rangle$, and $h_{1}$ must adjacent to $h_{3}$ as $o(h_{1})=q$. Since the Sylow $q$-subgroup of $H$ is cyclic and normal so  $x_{1}\sim x_{3}$.\\
 {Again, if $o(a)=p$, $o(b)=p^{m}q^{n}$, then in a similar manner, just as in the above paragraph, we obtain $x_{1}\sim x_{3}$. Therefore, $\mathcal{P}(G\times H)$ is a chordal graph.}
\medskip 

Finally, by using a similar argument, we can show that $\mathcal{P}(G\times H)$ is chordal when $H \cong C_{p^r}\times C_{q}$ or $C_{p}\times C_{q^s}$. This completes the proof.
\end{proof}

\begin{lemma}
\label{Chordal_product_th4}
Let $G$ and $H$ be two finite non-cyclic groups such that $|G\times H|$ has exactly two distinct prime divisors  $p$ and $q$, and $pq$ divides both $|G|$ and $|H|$. Then $\mathcal{P}(G\times H)$ is chordal if and only if $G$ and $H$ satisfy condition (iv) of Theorem \ref{Chordal_product_th}.
\end{lemma}

\begin{proof}
First, suppose that $\mathcal{P}(G\times H)$ is a chordal graph. If both $G$ and $H$ are non-EPPO groups, then $G\times H$ contains the non-power-chordal subgroup $C_{pq}\times C_{pq}$. So, at least one of $G$ and $H$ is an EPPO group. {We assume that $G$ is an EPPO group}.\\
 Let $P_{G}$, $P_{H}$ and $Q_{G}$, $Q_{H}$ be Sylow $p$- and $q$-subgroups of $G$ and $H$, respectively. {We treat the cases where $H$ is and is not an EPPO group separately below}.\medskip\\
\textbf{Case 1.} {$H$ is a non-EPPO group}. \\
{In that case, $H$ contains an element,} say $c$, of order $pq$. Now both $P_{G}\times \langle c\rangle$ and $Q_{G}\times \langle c \rangle$ are nilpotent and power-chordal, so (by Theorem \ref{Chordal_pre_th5}) $P_{G}$ is either $C_{p}$ or a non-cyclic $p$-group of exponent $p$ (say, $P$) and $Q_{G}$ is either $C_{q}$ or a non-cyclic $q$-group of exponent $q$ (say, $Q$). Now, we observe that both $P_{G}$ and $Q_{G}$ cannot be $P$ and $Q$ respectively. {Indeed, if such were the case, then the nilpotent power chordal subgroups $P_{G}\times Q_{H}$ and $Q_{G}\times P_{H}$ are both cyclic by Theorem \ref{Chordal_pre_th5}}. Thus, we obtain that both $P_{H}$ and $ Q_{H}$ are cyclic. Also, one can observe that $P_{H}, Q_{H}$ must be normal; {otherwise} there exists a {cycle of length} more than $4$. This implies that $H$ is a cyclic group. It contradicts the assumption that $H$ is non-cyclic. {Hence, we get the following three possibilities for $P_{G}$ and $Q_{G}$:
\begin{itemize}
\item[(x)]
$P_{G}\cong C_{p}$ and $Q_{G}\cong Q$.
\item[(y)]
$P_{G}\cong P$ and $ Q_{G}\cong C_{q}$.
\item[(z)]
$P_{G}\cong C_{p}$ and $Q_{G}\cong C_{q}$.
\end{itemize}}
First, suppose that $P_{G}\cong C_{p}$ and $Q_{G}\cong Q$. Since $P_{H}\times Q_{G}$ is nilpotent (as it is the direct product of two nilpotent groups) and power-chordal, $P_{H}$ must be cyclic as well as normal. Otherwise, there exists a cycle of length greater than $4$. Now, consider the Fitting subgroup of $H$. Clearly, $\mathrm{Fit}(H)\cong C_{p}\times C_{q}$ (as $H$ is a non-EPPO group). This implies $H \cong C_{pq}\rtimes C_{q^{s-1}}$, where $s\geq 2$. Additionally, from the {structure} of $H$ it is clear that Sylow $q$-subgroup $Q_{H}$ of $H$ is non-cyclic. Then, as $P_{G}\times Q_{H}$ is nilpotent and power-chordal, $P_{G}$ is cyclic as well as normal. This gives $G\cong C_{P}\rtimes Q$.

{Now, by using a similar approach where $P_{G}$ and $Q_{G}$ are of the form (y) or (z), we obtain the structures of $G$ and $H$ as: $G\cong C_{q}\rtimes P,\ H\cong C_{pq}\rtimes C_{p^{r-1}}$, or $G\cong C_{p}\rtimes C_{q},\ H\cong C_{pq}\rtimes C_{p^{r-1}}$ or $G\cong C_{q}\rtimes C_{p}, H\cong C_{pq}\rtimes C_{q^{s-1}}$ (where $r, s \geq 2$).}
\medskip\\
\textbf{Case 2:} {$H$ is an EPPO group}.\\
Let $G$ contain an element of order $p^{k}$, where $k \geq 1$. Then $H$ can only contain elements of order $q$ but not a higher power of $q$. Moreover, if $G$ contains an element of order a power of $q$, then $H$ must be the EPPO group in which every element is of order either $p$ or $q$, {i.e.,} $H$ is an EPO group. Otherwise, both $G$ and $H$ must be EPPO groups whose every non-identity element is of order either $q$ or a power of $p$. Next, we observe that {either the Sylow $p$-subgroups or the Sylow $q$-subgroups} of both $G$ and $H$ are necessarily cyclic and normal, or else there exist two non-adjacent elements in $G$ of order $p$ and two non-adjacent elements in $H$ of order $q$ such that $\mathcal{P}(G\times H)$ contains an $8$-cycle. So both $G$ and $H$ contain either the Sylow $p$- or $q$-subgroups (but not both), which are cyclic as well as normal. Therefore, $G$ and $H$ are the groups of the form (iv) of Theorem \ref{Chordal_product_th}. \medskip

For the converse part, first we assume that $G$ and $H$ are the groups as described in (iv)(1)(a) of Theorem \ref{Chordal_product_th}. Let $\cdots x_{1}\rightarrow x_{2}\leftarrow x_{3}\rightarrow x_{4}\leftarrow x_{5}\rightarrow\cdots$ be a cycle in the directed power graph of $G\times H$, where $x_{i}=(g_i, h_i)$. Let $g_{3}=a$, $h_{3}=b$ with $o(a)=p$, $o(b)=q$. Without loss of generality, we can choose $g_{2}=a^q, h_2=1$ and $g_4=1, h_4=b^p$. So, if we add $x_5$, then $h_5\in \langle b\rangle$ and $g_5$ must be adjacent to $a$, and hence $x_3\sim x_5$. Also, if we check for $4$-cycles, then we must have $x_1\sim x_3$. Further, if we take $g_3=a, h_3=b$ with $o(a)=p^{k}$, $o(b)=p$ or $o(a)=q^{s}$, $o(b)=q$, then $x_2$, $x_3$ and $x_{4}$ form a triangle. Thus, in this case, $\mathcal{P}(G\times H)$ is a chordal graph.
 
Now, we consider the structures as described in (iv)(1)(b) of Theorem \ref{Chordal_product_th}. Let $\cdots x_{1}\rightarrow x_{2}\leftarrow x_{3}\rightarrow x_{4}\leftarrow x_{5}\rightarrow\cdots$ be a cycle in the directed power graph of $G\times H$, where $x_{i}=(g_i, h_i)$. Let $g_{3}=a$, $h_{3}=b$. If both $a$ and $b$ are of order a power of $p$ or $q$, then $x_2\sim x_4$. So, we assume that $o(a)=p$ and $o(b)=q$, and Sylow $p$-subgroups of $G$ and $H$ are cyclic and normal. Without loss of generality, we take $g_{2}=a^q, h_2=1$ and $g_4=1, h_4=b^p$. Then $x_5$ must be adjacent to $x_3$. Also, if we check for $4$-cycles, then again we have $x_1\sim x_3$. Hence, in this case, $\mathcal{P}(G\times H)$ is chordal.\medskip

Next, we consider the groups described in (iv)(2) of Theorem \ref{Chordal_product_th}. First, we assume that $G\cong C_{p}\rtimes Q$ and $H\cong C_{pq}\rtimes C_{q^{s-1}}$ (where $s\geq 2$). If possible, let $\cdots x_{1}\rightarrow x_{2}\leftarrow x_{3}\rightarrow x_{4}\leftarrow x_{5}\rightarrow\cdots$ be a cycle in the directed power graph of $G\times H$, where $x_{i}=(g_i, h_i)$. Let $g_{3}=a$, $h_{3}=b$. If $o(a)$ and $ o(b)$ are powers of the same prime, we have then $x_{2}\sim x_{4}$. Let $g_{3}=a$, $h_{3}=b^{p}$, where $o(a)=p, o(b)=pq$. Preserving the generality, we let $o(g_{2})=p, o(h_2)=1$ and $o(g_4)=1, o(h_4)=q$. It gives $h_5 \in \langle b^{p}\rangle$ and $o(g_5)=p$. So, $x_{3}\sim x_{5}$. Similarly, if we check the existence of a $4$-cycle, we have $x_{1}\sim x_{3}$. For the other cases, {i.e.,} if $o(a)=p$ or $q$, and $o(b)=q^{k}$ or $p$ or $pq$, just as before, we obtain that either $x_{3}\sim x_{5}$ or $x_{1}\sim x_{3}$. Hence, $\mathcal{P}(G\times H)$ is a chordal graph.  

For the groups in Theorem \ref{Chordal_product_th} (iv)(2)(y), (iv)(2)(z), we can show that $\mathcal{P}(G\times H)$ is a chordal graph (by the similar {procedure} as done for the groups in the case (2)(x)).  This completes the proof of the theorem.
\end{proof}

\subsection{Direct product of $G$ and $H$, where $|G\times H|$ has three or more distinct prime divisors}
In this section, we characterize finite groups $G$ and $H$ such that $|G\times H|$ has more than two distinct prime divisors, and their direct product is power-chordal. The following theorem is the main theorem of this section.
\begin{theorem}
\label{Chordal_product_th7}
Let $G$ and $H$ be two finite groups such that $|G\times H|$ has three or more distinct prime divisors. Then $\mathcal{P}(G\times H)$ is a chordal graph if and only if one of the following holds for $G$ and $H$.
\begin{itemize}
    \item[(i)] One of $G$ and $H$ is a cyclic $p$-group {for some prime $p$} and the other one is either of the following:
    \begin{enumerate}
\item 
 An EPPO group.
\item 
{A group (of order divisible by three or more distinct primes) which contains elements $h$ and  $h'$ of respective orders $pq$ and $pr$, where $p\neq q$ and $q, r$ are primes (not necessarily distinct), such that one of (x) or (y) holds.}\\
     (x) {$q\neq r$ implies $h^{q}\neq {h'}^{r}$.}\\
(y) If {$q=r$ then $h^{p}\notin \langle {h'}^{p}\rangle$.}
 \end{enumerate}
    \item[(ii)] Both $G$ and $H$ are non-cyclic EPPO groups, and none of $G$ or $H$ is a group of prime power order, and one of the following holds for $G$ and $H$.
\begin{enumerate}
    \item Every Sylow subgroups of  at least one of $G$ and $H$ are cyclic.
    \item There exists a prime, say $p$, that divides both $|G|$ and $|H|$ such that the Sylow $p$-subgroups of both $G$ and $H$ are non-cyclic $p$-group of exponent $p$ and all other Sylow subgroups of both $G$ and $H$ are cyclic as well as normal.
    \end{enumerate}
\end{itemize}
\end{theorem}
We prove Theorem \ref{Chordal_product_th7} by proving the following three lemmas.
\begin{lemma}
\label{Chordal_product_th6}
Let $G$ and $H$ be two finite non-cyclic groups such that $|G\times H|$ has three or more distinct prime divisors. If one of $G$ and  $H$ is of prime power order then $\mathcal{P}(G \times H)$ is not a chordal graph.
\end{lemma}

\begin{proof}
Let $p$, $q$, and $r$ be three distinct primes that divide $|G\times H|$. {Suppose that $|G|=p^{\alpha}$, $\alpha>1$}. Then $q$ and $r$ divide $|H|$. Since $G$ is non-cyclic, it contains at least two non-adjacent elements say $a$ and $b$. Let $c$ and $d$ be two elements of $H$ such that $o(c)=q$ and $o(d)=r$. Then $(a,c)\sim(a,1)\sim(a,d)\sim(1,d)\sim(b,d)\sim(b,1)\sim(b,c)\sim(1,c)\sim(a,c)$ is a cycle in $\mathcal{P}(G\times H)$. Therefore, $\mathcal{P}(G\times H)$ is not a chordal graph.
\end{proof}

\begin{lemma}
Let $G$ and $H$ be two finite groups such that $|G\times H|$ has three or more distinct prime divisors. If $G$ is a cyclic group of order power of some prime $p$, then $\mathcal{P}(G\times H)$ is a chordal graph if and only if $H$ is one of the following groups.
    \begin{enumerate}
\item 
 {An} EPPO group.
\item 
 {A group (of order divisible by three or more distinct primes) which contains elements $h$ and  $h'$ of respective orders $pq$ and $pr$, where $p\neq q$ and $q, r$ are primes (not necessarily distinct), such that one of (x) or (y) holds.}
 \begin{itemize}
     \item[(x)] {$q\neq r$ implies $h^{q}\neq {h'}^{r}$.}
\item[(y)] {If} {$q=r$ then $h^{p}\notin \langle {h'}^{p}\rangle$.}
 \end{itemize}

\end{enumerate}
\end{lemma}
\begin{proof}
Let $G\cong C_{p^\alpha}$, where $\alpha\geq 1$. Let $p_i$ ($i\geq 1$) be distinct primes other than $p$ that divide $|H|$, and $P_{i}$ be corresponding Sylow $p_{i}$-subgroups of $H$. Now, $G\times P_{i}$ is nilpotent and power-chordal for each $i$. If $\alpha >1$, then $P_{i}$'s are $p_{i}$-groups of exponent $p_i$ (by Theorem \ref{Chordal_pre_th5}). Also, if $\alpha =1$ then each $P_i$ is either cyclic or a group of prime exponent. Further, we observe that $H$ cannot contain any element of order $p_{i}p_{j}$ for $i\neq j$; or else $G\times H$ contains the non-power-chordal subgroup $C_{pp_ip_j}$.
Thus, either $H$ is an EPPO group or $H$ has elements of order of the form $pp_{i}$.

{Next, suppose that $q$ and $r$ are primes (not necessarily distinct) such that $H$ contains elements of orders $pq$ and $pr$. Let $q\neq r$ and $h, h'\in H$ such that $o(h)=pq$ and $o(h')=pr$. If $h^{q}={h'}^{r}$, then $\mathcal{P}(G\times H)$ contains the cycle $(a^{q}, 1)\sim (a, h^p)\sim (1, h^{p^2})\sim (1, h)\sim (1, h^{q})\sim (1, h')\sim (1, {h'}^{p^2})\sim (a, {h'}^{p})\sim (a^{q}, 1)$; and similarly if $q=r$ and $h^{p}\in \langle {h'}^{p}\rangle$, then $\mathcal{P}(G\times H)$ contains the cycle $(g^{q}, 1)\sim (g, h^p)\sim (1, h^{p^2})\sim (1, {h'}^{p^2})\sim (g, {h'}^{p})\sim (g^{q}, 1)$. Thus, $H$ is either an EPPO group or it satisfies condition (b)}.\medskip

Conversely, let $\cdots x_{1}\rightarrow x_{2}\leftarrow x_{3}\rightarrow x_{4}\leftarrow x_{5}\rightarrow\cdots$ be a cycle in $\overrightarrow{\mathcal{P}}(G\times H)$.

First, let $G\cong C_{p^{\alpha}}$ with $\alpha \geq 1$ and $H$ be an EPPO group. If $\alpha >1$, then every non-identity element of $H$ is of order either a power of $p$ or a prime other than $p$. Again, if $\alpha =1$, then each non-identity element of $H$ is of prime power order. Let $x_{i}=(g_i, h_i)$. Consider $g_3=g$ and $h_3=h$ where $o(g)=p$ and $o(h)=p^k$. Clearly, $x_2 \sim x_{4}$. If $o(g)=p$ and $o(h)=p_{i}$, then $g_2=g^{p_i}, h_2=1$ and $g_4=1, h_4=h^{p}$. So $h_{5}\in \langle h\rangle$. Since $G$ is cyclic, so $g_5\in \langle a \rangle$, and hence $x_5\sim x_3$. {Again, if we verify the existence of $4$-cycles, in a similar manner, we obtain that $x_1\sim x_3$}. Thus, $\mathcal{P}(G\times H)$ is chordal.

Next, suppose that $H$ satisfies (b). {Let $q\neq r$ and $h, h'$ be two elements of respective orders in $H$. Let $g_{3}=g, h_{3}=h^p$ such that $o(g)=p, o(h)=pq$. Then, without loss of generality, we choose $g_2=g^{q}, h_2=1$ and $g_4=1, h_4=h^{p^2}$}. Now, preserving the generality, we take $g_5=1, h_5=h$, $g_6=1, h_6=h^{q}$. {Then $g_7\in \langle g \rangle$ and $h_7 \in \langle h \rangle$ (as $G$ is cyclic)}. Thus, $x_{7}\sim x_{3}$. {Similarly, for the cycles of lengths $4$ and $6$, we obtain} either $x_{1}\sim x_{3}$ or $x_3\sim x_{6}$. Therefore, in this case, $\mathcal{P}(G\times H)$ does not contain any induced cycle of length $\geq 4$.

Again, if $q=r$ with $h^{p}\notin \langle {h'}^{p}\rangle$, $h^{p}\nsim {h'}^{p}$, then there does not exist any cycle of length $\geq 4$. Hence, in this case also, $\mathcal{P}(G\times H)$ is a chordal graph.
This completes the proof of the lemma.
\end{proof}
\begin{lemma}
\label{Chordal_product_th_x}
Let $G$ and $H$ be two finite non-cyclic groups with $|G\times H|$ has three or more distinct prime divisors, and none of $G$ and $H$ is a group of prime power order. Then $\mathcal{P}(G \times H)$ is chordal if and only if $G$ and $H$ are EPPO groups satisfying one of the following.
\begin{enumerate}
    \item Each Sylow subgroup of at least one of $G$ or $H$ is cyclic.
    \item There exists a prime, say $p$, that divides both $|G|$ and $|H|$ such that the Sylow $p$-subgroups of both $G$ and $H$ are non-cyclic $p$-group of exponent $p$ and all other Sylow subgroups of both $G$ and $H$ are cyclic as well as normal.
\end{enumerate}
\end{lemma}

\begin{proof}
Let $G$ and $H$ be two finite non-cyclic groups {such that $|G\times H|$ have three or more distinct prime divisors, }say $p$, $q$ and $r$. Suppose that $\mathcal{P}(G\times H)$ is chordal. We observe that both $G$ and $H$ must be EPPO groups, or else $G\times H$ contains a non-power-chordal subgroup. 
{Now, consider two cases according to number of common prime divisors of $|G|$ and $|H|$.}\medskip\\
\textbf{Case 1.} {$|G|$ and $|H|$ have no common prime divisor.} \\
Let $p$ be a prime that divides $|G|$ with the Sylow $p$-subgroups of $G$ are non-cyclic. Then $H$ satisfies (a) by Theorem \ref{Chordal_pre_th5} (since for any prime say $p_{i}$ divides $|H|$, $P\times P_{i}$ is nilpotent and power-chordal; here $P$, $P_{i}$ are Sylow $p$-subgroup of $G$ and Sylow $p_{i}$-subgroup of $H$ respectively). {Again if the Sylow $p$-subgroups of $G$ are cyclic, then either $G$ or $H$ satisfy the condition (a).}\medskip\\
\textbf{Case 2.} {$|G|$ and $|H|$ has at least one common prime divisor.}\\
Let $p$ be a prime that divides both $|G|$ and $|H|$, and the Sylow $p$-subgroup of $G$ (say, $P_{G}$) be non-cyclic. {For the prime divisors $p_{i}(\neq p)$ of $|H|$, let $P_{i}$ be the Sylow $p_{i}$-subgroups of $H$. Since $P_{G}\times P_{i}$ is nilpotent and power-chordal and $P_{G}$ is non-cyclic, by Theorem \ref{Chordal_pre_th5} each Sylow $p_{i}$-subgroup ($p_{i}\neq p$) of $H$ must be cyclic. Also, we observe that each Sylow $p_{i}$-subgroups ($p_{i}\neq p$) must be normal as well.} Otherwise, $H$ contains two non-adjacent elements of order $p_{i}\neq p$, for some $i$. Also, as Sylow $p$-subgroups of $G$ are non-cyclic, it contains two non-adjacent elements of order $p$. Then $\mathcal{P}(G\times H)$ contains a cycle of length greater than $4$. Moreover, as $P_{G}\times P_{i}$ is nilpotent and power-chordal with $P_{i}$ being cyclic and $P_{G}$ being non-cyclic, $P_{G}$ must be a {non-cyclic $p$-group of exponent $p$}. {Consider the following subcases based on Sylow p-subgroups of $H$.}\medskip\\
\textbf{Subcase I.} If the Sylow $p$-subgroups of $H$ are cyclic, then each Sylow subgroup of $H$ are cyclic; and $H$ satisfies condition (a).\medskip\\
\textbf{Subcase II.} If Sylow $p$-subgroups of $H$ is non-cyclic, then again we can conclude that all Sylow $p_{j}$-subgroups ($p_{j}\neq p$) of $G$ are cyclic and normal and Sylow $p$-subgroups of $H$ are {non-cyclic $p$-group of exponent $p$}. So, in this case, $G$ and $H$ satisfies the condition (b). 

Similarly, if we consider that Sylow $p$-subgroups of $G$ are cyclic, then either (a) or (b) hold.\medskip

For the converse of the theorem, first we assume that $H$ is the non-cyclic EPPO group such that all Sylow subgroups of $H$ are cyclic. 
We now show that $\mathcal{P}(G\times H)$ is chordal. Suppose that $\overrightarrow{\mathcal{P}}(G\times H)$ contains a cycle $\cdots \leftarrow x_{1}\rightarrow x_{2}\leftarrow x_{3}\rightarrow \cdots$, where $x_{i}=(g_i, h_i)$. Let $g_{3}=a$ and $h_{3}=b$. If both $o(a)$ and $o(b)$ are powers of the same prime, then $x_{2}\sim x_{4}$. Without loss of generality, we assume that $o(a)$ is a power of $p$ and $o(b)$ is a power of $q$ ($\neq p$). Let $o(a)=p^{m}$ and  $o(b)=q^{n}$. Then either $g_{2}=a^{q^n}, h_{2}=1$, $g_{4}=1, h_{4}=b^{p^m}$ or the reverse. Preserving the generality, suppose that $g_{2}=a^{q^n}, h_{2}=1$, $g_{4}=1, h_{4}=b^{p^m}$. Then $g_{1}\in \langle a \rangle$ and $o(h_{1})=q^{n}$. As Sylow $q$-subgroups of $H$ are cyclic, so $h_{1}\in \langle b \rangle$. Hence, $x_{1}\sim x_{3}$. Therefore, $\mathcal{P}(G\times H)$ is a chordal graph.

Next, suppose that $G$ and $H$ are two non-cyclic EPPO groups and that condition (b) holds. If possible, let $\cdots \leftarrow x_{1}\rightarrow x_{2}\leftarrow x_{3}\rightarrow \cdots$ be a cycle in $\overrightarrow{\mathcal{P}}(G\times H)$, where $x_{i}=(g_i, h_i)$. Suppose that $g_{3}=a, h_{3}=b$.
If both $a$ and $b$ are powers of the same prime, then $x_{2}\sim x_{4}$.
So we assume that $o(a)$ and $o(b)$ are powers of different primes. Without loss of generality, let $o(a)=p$ and $ o(b)=q^{s}$. Then, either $g_{2}=a^{q^s}, h_{2}=1$, $g_{4}=1, h_{4}=b^{p}$ or the reverse. Preserving the generality, we assume that $g_{2}=a^{q^s}, h_{2}=1$, $g_{4}=1, h_{4}=b^{p}$. Then again, we get $x_{1}\sim x_{3}$. Thus, $\mathcal{P}(G\times H)$ does not contain any cycle of length $4$ and above. Hence, $\mathcal{P}(G\times H)$ is a chordal graph.

Following a similar approach, for the other possibilities of $o(a)$ and $o(b)$, we conclude that $\mathcal{P}(G\times H)$ is a chordal graph.
\end{proof}


 \section{Symmetric groups}
In this section, we compute the values of $n$ for which the symmetric group $S_n$ is a power-chordal group. We observe that Theorem 4.1 of \cite{Mehatari} also holds for the chordality of the power graph of any finite group. So, we get the following theorem.
\begin{theorem}
Let $G$ be a finite group such that any two distinct maximal cyclic subgroups of $G$ intersect at the identity. Then  the  power graph of $G$ is a chordal graph  if and only if the orders of the maximal cyclic subgroups are of either a prime power or a product of some prime and a prime power.
\end{theorem}

\begin{theorem}
\label{Chordal_symm_th1}
$\mathcal{P}(S_n)$ is a chordal graph if and only if $n\leq 5$.
\end{theorem}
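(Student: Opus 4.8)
The statement has two directions, and the natural strategy is to obtain chordality for small $n$ from the maximal--cyclic--subgroup criterion stated just above, and non--chordality for large $n$ by exhibiting a single chordless cycle that survives inside every larger symmetric group. For the small cases, note first that $S_1,S_2$ are $2$--groups, so $P(S_n)$ is chordal by Theorem \ref{Chordal_pre_th1}, while $S_3\cong D_3$ is covered by Corollary \ref{Chordal_pre_cor2}. For $S_4$ and $S_5$ I would verify that any two distinct maximal cyclic subgroups meet only in the identity: this amounts to a short check on cycle types showing that every non--identity permutation lies in a \emph{unique} maximal cyclic subgroup (for instance a transposition $(ij)$ sits only inside the order--$6$ subgroup $\langle (ij)(k\ell m)\rangle$, and a double transposition $(12)(34)$ is the square of essentially one $4$--cycle). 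Once the trivial--intersection hypothesis is in place, one simply reads off the orders of the maximal cyclic subgroups — $2,3,4$ for $S_4$ and $4,5,6$ for $S_5$ — all of which are prime powers or of the form $p^{k}q$, so the preceding theorem yields chordality at once.

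For the other direction I would use that $S_m$ embeds in $S_n$ for $m\le n$ (acting on the first $m$ symbols) and that the power graph of a subgroup is an induced subgraph of the power graph of the whole group; hence it suffices to produce one chordless cycle in the smallest non--chordal $S_m$, which then persists in all larger $S_n$. The engine I would use is an induced $8$--cycle built from two distinct transpositions $s_1,s_2$ and two distinct $3$--cycles $t_1,t_2$ whose supports avoid both transpositions. Taking as the four ``large'' vertices the order--$6$ elements $A_1=s_1t_2,\ A_2=s_1t_1,\ A_3=s_2t_1,\ A_4=s_2t_2$, one traverses $A_1\sim s_1\sim A_2\sim t_1\sim A_3\sim s_2\sim A_4\sim t_2\sim A_1$; each consecutive adjacency holds because $s_i$ and $t_j$ are precisely the order--$2$ and order--$3$ powers of the relevant $A_i$. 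The substance is to check the absence of chords, which follows because the two transpositions, the two $3$--cycles and the four order--$6$ elements are pairwise incomparable in the power order.

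The genuinely delicate point — and the step I expect to be the main obstacle — is pinning down the exact threshold rather than merely producing a cycle. Two structural facts make this tractable: a power graph has no induced odd cycle (around any induced cycle each vertex is either a power of both its neighbours or has both neighbours as its powers, and these two roles must alternate, forcing even length), and in any induced cycle each ``large'' vertex generates a cyclic group containing two incomparable elements, hence a group whose order has at least two distinct prime factors. In $S_m$ this pins those vertices to the smallest available element order with two prime factors, which severely restricts the supports of $s_1,s_2,t_1,t_2$; counting how many symbols the $8$--cycle above actually requires is exactly what decides the first $m$ at which the construction fits, and equivalently forces the verification that every smaller $S_k$ is chordal. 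I would therefore concentrate the careful work on this support/parity bookkeeping, since it is where the precise cut--off is determined and where an off--by--one is easiest to make.
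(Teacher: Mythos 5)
Your chordality argument for $n\le 5$ is sound and essentially the paper's: check that distinct maximal cyclic subgroups of $S_4$ and $S_5$ meet trivially and read off their orders ($2,3,4$ and $4,5,6$, all prime powers or of the form $p^kq$), then invoke the preceding theorem. The gap is in the non-chordality direction, and it is exactly the off-by-one you flagged but did not resolve. Your induced $8$-cycle needs $t_1\not\sim t_2$, and two $3$-cycles are adjacent in a power graph precisely when they are mutually inverse, i.e.\ have the same support; so $t_1,t_2$ must have distinct supports, forcing $|\mathrm{supp}(t_1)\cup\mathrm{supp}(t_2)|\ge 4$. Two distinct transpositions whose supports avoid both $t_1$ and $t_2$ then require at least $3$ further symbols (a single leftover pair gives only one transposition), so your configuration needs at least $7$ symbols: it lives in $S_7$ but cannot be realized in $S_6$. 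Since the theorem asserts non-chordality already at $n=6$, your proof as written establishes the result only for $n\ge 7$ and leaves the boundary case open --- and $P(S_6)$ is genuinely non-chordal, so the case cannot be absorbed into the chordal side.

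The reasoning you propose to settle the threshold is also invalid as stated: the failure of your particular $A_i=s_it_j$ template in $S_6$ ``equivalently forces the verification that every smaller $S_k$ is chordal'' is false, because longer induced cycles can reuse symbols between the transposition side and the $3$-cycle side, which your rigid two-transposition/two-$3$-cycle pattern forbids. This is precisely how the paper handles $n=6$: it exhibits an induced $12$-cycle in $S_6$, namely $(1,2,3)(4,5)\sim(4,5)\sim(1,2,6)(4,5)\sim(1,2,6)\sim(1,2,6)(3,4)\sim(3,4)\sim(1,2,5)(3,4)\sim(1,2,5)\sim(1,2,5)(4,6)\sim(4,6)\sim(1,2,3)(4,6)\sim(1,2,3)\sim(1,2,3)(4,5)$, in which each transposition and each $3$-cycle is shared by two consecutive order-$6$ vertices whose complementary parts vary, so the whole cycle fits inside $6$ symbols. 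To repair your argument you must either verify such a longer cycle in $S_6$ directly, or generalize your template to three transpositions and three $3$-cycles with overlapping supports; your parity observation (induced cycles of length at least $4$ in a power graph alternate sources and sinks, hence are even) is correct but is no substitute for an explicit witness in $S_6$.
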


\begin{proof}
For $n \geq 6$, the power graph of $S_{n}$ always contains  $(a,b,c)(d,e) \sim (d,e) \sim (a,b,f)(d,e) \sim (a,b,f) \sim (a,b,f)(c,d) \sim (c,d) \sim (a,b,e)(c,d) \sim (a,b,e) \sim (a,b,e)(d,f) \sim (d,f) \sim (a,b,c)(d,f) \sim (a,b,c) \sim (a,b,c)(d,e)$, a cycle of length $12$.

If $n \leq 5$, then any two distinct maximal cyclic subgroups of $S_n$ intersect at the identity.  The orders of those maximal cyclic subgroups are, respectively, $\{2\}$ for $(n=2)$, $\{2,3\}$ for $(n=3)$, $\{2,3,4\}$ for $(n=4)$ and $\{4,5,6\}$ for $(n=5)$. The power graphs of all these cyclic subgroups are chordal.

Hence $\mathcal{P}(S_n)$ is a chordal graph if and only if $n\leq 5$..
\end{proof}


\section{Simple groups}
In this section, we try to identify finite simple groups whose power graph is a chordal graph. The cyclic group $C_{p}$ (for prime $p$) belongs to the class of finite simple groups, and its power graph is chordal by Theorem \ref{Chordal_pre_th1}. In the subsequent subsections, we discuss the chordality of the power graphs of the other finite simple groups, namely, the alternating groups, the simple groups of Lie type, and the sporadic simple groups. To draw the conclusion regarding the {chordality} of simple groups, we used information from the book \cite{Wilson} and the papers \cite{King, Suzuki, Suzuki_1, Kleidman_{1}, Kleidman, Conway, Bruce, Malle}.
\subsection{Alternating groups}
\begin{theorem}
\label{chordal_alternating}
Let $A_{n}$ be the alternating group on $n$ symbols. Then the power graph of $A_{n}$ is chordal if and only if $n \leq 7$.
\end{theorem}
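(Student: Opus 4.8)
The plan is to prove both implications through the structure of the maximal cyclic subgroups of $A_n$, using the criterion stated at the opening of this section (chordality for groups whose maximal cyclic subgroups meet trivially) for the positive direction, and reducing the negative direction to a single explicit witness inside $A_7$.

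For sufficiency ($n\le 6$) I would first record the element orders: these are $\{1,2,3\}$ for $n\le 5$ and $\{1,2,3,4,5\}$ for $A_6$. Hence every maximal cyclic subgroup has prime-power order ($2,3,5$, together with the single new value $4$ occurring in $A_6$), so the order hypothesis of the criterion holds automatically. What remains is the trivial-intersection hypothesis. For $n\le 5$ every maximal cyclic subgroup has prime order, so two distinct ones meet only in the identity. For $A_6$ the only composite case is $C_4=\langle (abcd)(ef)\rangle$, and here the key step is to show that each involution of $A_6$ (a double transposition) is the square of a \emph{unique} such $C_4$: the involution has a fixed $4$-point support, its two-point complement is forced to be the transposed pair $\{e,f\}$, and the $4$-cycle on the support is then determined up to inverse. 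Thus distinct $C_4$'s share no involution, and with the coprimality of $3,4,5$ this gives pairwise trivial intersection; the criterion then yields that $P(A_n)$ is chordal for $n\le 6$.

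For necessity ($n\ge 7$) I would use that $A_7\le A_n$ for all $n\ge 7$ and that the power graph of a subgroup is an induced subgraph of the ambient power graph; since chordality passes to induced subgraphs, it suffices to exhibit one induced cycle of length $\ge 4$ inside $P(A_7)$. The natural building blocks are the elements of order $6$, which in $A_7$ have cycle type $(3,2,2)$: for $g=(abc)(de)(fg)$ one has $g^2=(acb)$ (a $3$-cycle) and $g^3=(de)(fg)$ (a double transposition), so each such $g$ is joined in $P(A_7)$ to exactly one $3$-cycle hub $\langle (abc)\rangle$ and to exactly one involution. In parallel I would record the non-adjacencies needed to keep a cycle chordless: two elements of order $6$ are adjacent only when they generate the same $C_6$; two involutions, and two $3$-cycles from distinct cyclic subgroups, are never adjacent; and a $3$-cycle is never adjacent to an involution. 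One then tries to thread several order-$6$ elements into a chordless cycle through these shared power-parts, closing up via Theorem \ref{Chordal_pre_th7} if a direct cycle proves awkward.

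The main obstacle is exactly the construction and the chordlessness check of this cycle, and this is where $A_7$ behaves differently from $S_6$. In the $S_6$ argument one glued order-$6$ elements alternately along a shared transposition and along a shared $3$-cycle; but in $A_7$ any two order-$6$ elements sharing their double-transposition part are forced, by the count of the three remaining points, to share the $3$-cycle part as well and hence to generate the same $C_6$, which would create a chord. So the $S_6$ template must be replaced, and the gluing instead routed through the $3$-cycle hubs, exploiting that a single $C_3=\langle (abc)\rangle$ sits below the three distinct $C_6$'s arising from the three double transpositions on the complementary $4$-set (so that no single cyclic subgroup is itself non-chordal, and the witness must come from these overlaps rather than from Corollary \ref{Chordal_pre_cor1}). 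Producing an assembly that actually closes into an induced $C_k$ with $k\ge 4$ and verifying the absence of chords is the delicate heart of the proof; once a concrete induced cycle in $A_7$ is in hand, the embedding $A_7\hookrightarrow A_n$ settles all $n\ge 7$ simultaneously.
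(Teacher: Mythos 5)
Your sufficiency half is sound and is essentially the paper's argument done more carefully: for $n\le 5$ all maximal cyclic subgroups have prime order (fix the small slip that $A_5$ has elements of order $5$, so the element orders are $\{1,2,3,5\}$, not $\{1,2,3\}$), and your uniqueness claim for the $C_4$ above each involution of $A_6$ is exactly what the paper's terse ``trivial intersection'' assertion needs. The genuine gap is in the necessity half: you never construct the induced cycle in $P(A_7)$, and in fact no such cycle exists --- your own key observation, pushed one step further, proves this. In $A_7$ the only elements having an involution $t=(de)(fg)$ as a proper power are the generators of the \emph{unique} $C_6$ above it (as you showed, the three leftover points force the $3$-cycle part) and order-$4$ elements $x$ of type $(4,2)$ with $x^2=t$. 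But such an $x$ satisfies $N(x)\setminus\{e\}=\{x^{-1},x^2\}$ with $x^{-1}\sim x^2$ (there is no element of order $8$ or $12$ in $A_7$), so $x$ is simplicial and cannot lie on any chordless cycle; hence an involution's usable cycle-neighbours are only $g,g^{-1}$, which are adjacent, so involutions are excluded too. An order-$6$ element $g=(abc)(de)(fg)$ then has usable neighbours only $\{g^{-1},(abc),(acb)\}$, which are pairwise adjacent, so $g$ is excluded as well --- this is precisely why your ``hub'' routing dies: from $(abc)$ you can leave along two non-adjacent $C_6$-generators, but each of them can only continue to its own cube, a dead-ended involution. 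Since $3$-cycles, double $3$-cycles, and elements of order $5$ and $7$ cannot close a cycle among themselves, $P(A_7)$ has no induced cycle of length $\ge 4$ at all: it is chordal.

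Consequently the delicate heart of your plan cannot be supplied for $n=7$, and the theorem's stated threshold is itself wrong at this point; note that the paper's displayed witness is invalid --- it contains $(1,2)$, an odd permutation not in $A_7$, and its order-$4$ vertices are simplicial in $P(A_7)$, so they could never lie on an induced cycle. The correct base case for your reduction is $A_8$, where the $S_6$-style alternation you tried to imitate does work because a quadruple transposition, a $(3,2,2)$-involution with the extra pair $(78)$, sits below many distinct $C_6$'s: for instance
$(123)(45)(78)\sim(45)(78)\sim(126)(45)(78)\sim(126)\sim(126)(34)(78)\sim(34)(78)\sim(125)(34)(78)\sim(125)\sim(125)(46)(78)\sim(46)(78)\sim(123)(46)(78)\sim(123)\sim(123)(45)(78)$
is a chordless $12$-cycle in $P(A_8)$: all twelve permutations are even, each consecutive adjacency is a cube or fourth power of an order-$6$ element, and every non-consecutive pair either lies in distinct cyclic subgroups of equal order or has mismatched $3$-cycle/involution parts. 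Since $A_8\le A_n$ for all $n\ge 8$ and chordality passes to induced subgraphs, this settles $n\ge 8$; your overall template (one explicit witness plus subgroup-closure) is right, but the witness must live in $A_8$, not $A_7$, and the ``if and only if'' boundary should read $n\le 7$.
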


\begin{proof}
If $n \geq 8$, $\mathcal{P}(A_{n})$ contains a cycle  
\begin{eqnarray*}
(a, b)(c, d) \sim (a, b)(c, d)(e, f, g) \sim (e, f, g) \sim (a, c)(b, d)(e, f, g) \sim (a, c)(b, d) 
\sim\\
(a, c)(b, d)(e, f, h) \sim (e, f, h) \sim (a, b)(c, d)(e, f, h) \sim (a, b)(c, d);\end{eqnarray*} 
so its  power graph is not a chordal graph.

The alternating group $A_{3}$ is  basically the cyclic group $C_3$. Thus it follows that the power graph of $A_3$ chordal.

For $n=4$, $5$ and $6$, the orders of the cyclic subgroups of $A_n$  are  $\{3,2\}$ for $(n=4)$, $\{5, 3, 2\}$ for $(n=5)$ and $\{5, 4, 3, 2\}$ for $(n=6)$ respectively, and the maximal cyclic subgroups intersect at the identity. As all these cyclic subgroups have complete power graphs, so $\mathcal{P}(A_n)$ is chordal for $n=4, 5,6$.\medskip

Now we consider $n=7$. Let $a\in A_7$, then $o(a)\in\{2,3,4,5,6,7\}$. {Suppose for the sake of contradiction that $\mathcal{P}(A_7)$ contains an induced cycle of length at least $4$}. Then that cycle must contains at least one element of order $6$. Let $x=(a,b,c)(d,e)(f,g)$ be such an element, and $y$ and $z$ be the neighbours of $x$. Then either $y=(d,e)(f,g)$ or $z=(d,e)(f,g)$. Suppose that $y=(d,e)(f,g)$. Now in $\mathcal{P}^*(A_7)$, the neighbours of $y$ are $x$, $x^{-1}$ or an element of order $4$; however, for a neighbour of $y$ other than $x$ we can choose none of them. Therefore, an induced cycle of {length at least $4$} is not possible. So $A_7$ is power-chordal.
This completes the proof.
\end{proof}

\subsection{ Lie type simple groups of rank $1$}
The groups  $\PSL(2,q)$, $\PSU(3,q)$, $\Sz(q)$ where $q=2^{2e+1}$, and ${}^{2}{G_{2}}(q)=R_{1}(q)$ where $q=3^{2e+1}$ are the only simple groups of Lie type of  rank $1$. In this section, we examine the chordality of the power graph of these groups.

\begin{theorem}
\label{Chordal_psl}
Let $G$ be the group $\PSL(2,q)$, where $q$ is a prime power, then the power graph of $G$ is a chordal graph if and only if 
\begin{enumerate}
    \item 
   both $q+1$ and $q-1$ are either powers of some prime or {products of a prime and a prime power, provided $q$ is even;}
    \item
   both $(q+1)/2$ and $(q-1)/2$ are either prime powers or {products of a prime and a prime power, provided $q$ is odd.}
\end{enumerate}
\end{theorem}

\begin{proof}
First, we consider $q$ to be even, i.e., $q$ is a power of $2$. Then $\mathcal{P}^*(G)$ is a disjoint union of proper power graphs of cyclic groups of orders $q+1$ and $q-1$, along with some isolated vertices. As a result, $\mathcal{P}(G)$ is a chordal graph if and only if $\mathcal{P}(C_{q+1})$ and $\mathcal{P}(C_{q-1})$ are both chordal. Therefore, by Theorem \ref{Chordal_pre_th5}, $\mathcal{P}(G)$ is chordal if and only if both $q+1$ and $q-1$ are of the form in (a).\medskip

Next, suppose that $q$ is a power of an odd prime. Then $\mathcal{P}^*(G)$ is a disjoint union of proper power graphs of $C_{(q \pm 1)/2}$ along with some isolated vertices. So again, by Theorem \ref{Chordal_pre_th5}, we conclude that, $\mathcal{P}(G)$ is chordal if and only if both $(q \pm 1)/2$  are of the form in (b).
\end{proof}

\begin{theorem}
The power graph of $\PSU(3, q)$, where $q$ is a power of $2$ and $q \geq 4$, is always a non-chordal graph.
\end{theorem}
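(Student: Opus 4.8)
The plan is to exploit the property---recorded just above---that chordality of power graphs is inherited by subgroups, so that it suffices to produce a single subgroup $H\leq\PSU(3,q)$ with $P(H)$ non-chordal. Write $d=\gcd(3,q+1)$, so $\PSU(3,q)=\SU(3,q)/Z$ with $|Z|=d$. I would work with the maximal tori of $\SU(3,q)$: a cyclic torus of order $q^2-1$, a cyclic torus of order $q^2-q+1$, and a homocyclic torus $C_{q+1}\times C_{q+1}$; their images in $\PSU(3,q)$ are the cyclic groups $C_{(q^2-1)/d}$, $C_{(q^2-q+1)/d}$ and the abelian rank-two group $(C_{q+1}\times C_{q+1})/Z$. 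Since an even $q\geq4$ forces $q-1\geq3$, $q+1\geq5$ and $q^2-q+1\geq13$, these orders are divisible by several primes, and the strategy is to show that for every even $q\geq 4$ at least one of these subgroups is non-chordal.

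The cleanest instances are handled by Corollary \ref{Chordal_pre_cor1} and Theorem \ref{Chordal_pre_th5}. Whenever $(q^2-1)/d$ has at least three distinct prime divisors the cyclic subgroup $C_{(q^2-1)/d}$ is non-chordal, and whenever the primes of $q+1$ give the diagonal torus a Sylow subgroup that is neither cyclic nor of prime exponent, that torus is non-chordal. To organise the remaining possibilities I would invoke Mihailescu's theorem (Theorem \ref{Chordal_pre_th8}): for $q=2^f$ the numbers $q\pm1$ are proper prime powers only for $2^3+1=3^2$, so apart from $q=8$ each of $q-1,q+1$ is either prime or has two distinct prime factors. Feeding this into the factorisation $q^2-1=(q-1)(q+1)$ and separating according to $d=1$ and $d=3$, one reduces---via the Mersenne/Fermat constraints on $f$---to a short, explicitly describable collection of exceptional $q$.

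The hard part is precisely this exceptional collection: the values of $q$ (for instance $q=4$ and $q=32$) for which the central quotient by $Z$ and the arithmetic of $q\pm1$ conspire so that $C_{(q^2-1)/d}$, $C_{(q^2-q+1)/d}$ and $(C_{q+1}\times C_{q+1})/Z$ all have orders of the chordal shape $p^{\alpha}$ or $p^{\alpha}r$. For these the witness to non-chordality cannot be a cyclic subgroup and must be a genuinely non-cyclic induced cycle. Here I would bring in two further subgroups taken from the maximal-subgroup lists of the cited references: the stabiliser of a non-isotropic point, which for even $q$ is $C_{(q+1)/d}\times\PSL(2,q)$ (so that Theorem \ref{Chordal_product_th5} applies at once whenever $(q+1)/d$ is not a prime power), and the normaliser $\big[(C_{q+1}\times C_{q+1})/Z\big]\rtimes S_3$ of the diagonal torus, in which the reflections of the Weyl group $S_3$ together with torus elements of two coprime orders should be assembled into an explicit induced $4$- or $6$-cycle. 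Producing such a cycle uniformly---rather than re-deriving it for each exceptional $q$---is the principal obstacle; the finitely many genuinely small cases ($q=4,8$) I would instead dispose of by exhibiting an explicit induced cycle among elements of mixed order (e.g.\ orders $10$ and $15$ for $q=4$), exactly in the spirit of the cycles displayed for $S_n$ and $A_n$ earlier in the paper.
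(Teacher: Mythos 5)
Your plan is not yet a proof, and the hole sits exactly where the theorem's content lies. The torus-plus-arithmetic reductions (Corollary \ref{Chordal_pre_cor1}, Theorem \ref{Chordal_pre_th5}, Mih\`ailescu) do dispose of generic even $q$, but you concede you cannot handle the exceptional values uniformly, and these include $q=4$, the very first case of the statement. There your fallbacks demonstrably fail rather than merely remain unwritten. For $q=4$ the tori have orders $15$, $13$, $5^2$, all chordal; the point-stabiliser subgroup is $C_5\times\PSL(2,4)=C_5\times A_5$ with $A_5$ an EPPO group, so Theorem \ref{Chordal_product_th5} \emph{certifies chordality} of that subgroup instead of refuting it; in the torus normaliser $(C_5\times C_5)\rtimes S_3$ the Sylow $2$-subgroup is $C_2$, so two distinct involutions have no common neighbour, two order-$10$ elements with the same square have all their common neighbours inside one $C_5$ (a clique), and there are no elements of order $15$ --- no induced $4$- or $6$-cycle of the advertised reflection-plus-torus type exists. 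Your ``orders $10$ and $15$'' cycle in $\PSU(3,4)$ cannot be a $4$-cycle either, since the common neighbourhood of such a pair lies in the shared $C_5$; any witness needs a longer cycle whose existence depends on centraliser data (e.g.\ the centraliser of an order-$3$ element is $C_{15}$) that your sketch never engages. The same collapse recurs at $q=32$: $(q^2-1)/3=11\cdot31$ and $(q^2-q+1)/3=331$ are of chordal shape, the image of $C_{33}\times C_{33}$ has cyclic Sylow $3$-subgroup and elementary abelian Sylow $11$-subgroup (chordal by Theorem \ref{Chordal_pre_th5}), and in $C_{11}\times\PSL(2,32)$ all mixed orders are $33$ with distinct $C_{33}$'s meeting trivially, so again the product criterion gives chordality. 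For the exceptional $q$ you therefore have no argument, only a statement of intent.

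The paper takes an entirely different and uniform route that you should compare against: for a prime $p>3$ dividing $q+1$ --- which exists for every even $q\geq4$ except $q=8$, by the same Catalan/Mih\`ailescu input you invoke --- it writes down explicit commuting matrices $h,k\in\SU(3,q)$ with $o(h)=p$, $o(k)=2p$, $k^2=h$, claims a $4$-cycle on $k^p,\,hk^p,\,h,\,k$, passes to the central quotient, and settles $q=8$ separately via the subgroup $C_9\rtimes\SL(2,3)\supseteq C_3\times Q_8$, non-chordal by Theorem \ref{Chordal_pre_th5}. (Be aware when comparing that the printed cycle itself needs repair: since $k^2=h$, all four vertices lie in the single cyclic group $\langle k\rangle\cong C_{2p}$, and $hk^p=k^{p+2}$ is a generator of it, hence adjacent to $k$; so as displayed the cycle has a chord, and a correct witness must involve two distinct maximal cyclic subgroups.) The structural lesson your proposal circles without landing is precisely this: abelian subgroups and order arithmetic cannot decide all $q$, so a proof must produce a concrete non-cyclic configuration valid for every exceptional $q$ --- that is the step you identified as ``the principal obstacle'' and left open, and without it the proposal does not establish the theorem.
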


\begin{proof}
Let $\beta$ be a generator of the multiplicative group of $GF(q^2)$. Then the order of $\beta ^{q-1}$ is $q+1$. Let $p$ be a prime such that $p$ divides $q+1 (>3)$. We set $d=(q+1)/p$. Then the order of $\alpha =\beta ^{d(q-1)}$ is $p$ and $\alpha \in GF(q^2)$.
Consider the elements 
$$h=\begin{bmatrix}
\alpha & 0 & 0 \\ 0 & \alpha & 0 \\ 0 & 0 & \alpha^{-2}
\end{bmatrix}\ 
\text{ and }
k=\begin{bmatrix}
0 & \alpha & 0 \\ 1 & 0 & 0 \\ 0 & 0 & \alpha^{-1}
\end{bmatrix}.$$
Then, $h,k\in SU(3,q)$ and $hk=kh$. Also, we have $k^2 =h$, and $o(h)=p$, $o(k)=2p$, $o(hk)=2p$. 
So, $SU(3,q)$ contains a cycle $(k^p, hk^p, h, k, k^p)$.
Now let $Z$ denote the {center} of $SU(3,q)$. Then $(k^{p}Z, hk^{p}Z, hZ, kZ, k^{p}Z)$ is cycle of length $4$ in $\mathcal{P}(\PSU(3,q))$. \medskip 

However, the above argument does not work if $q=8$. So we conclude our proof by showing $\PSU(3,8)$ is non-power-chordal. Note that,
$\PSU(3,8)$ contains a subgroup isomorphic to $C_{9}\rtimes \SL_{2}(3)$ which contains  $C_{3}\times Q_{8}$. Now, by Theorem \ref{Chordal_pre_th5}, the power graph of $C_{3}\times Q_{8}$ is not chordal, and hence $\mathcal{P}(\PSU(3,8))$ is non-chordal.
\end{proof}

\begin{theorem}
\label{PSU_odd}
Let $q=p^{n}$, where $p$ is an odd prime and $n$ be any positive integer. Then the power graph of $\PSU(3,q)$ is not a chordal graph.
\end{theorem}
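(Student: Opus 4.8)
The plan is to exploit that chordality is subgroup-closed, as recorded in the excerpt: it suffices to produce, for every odd prime power $q$, a subgroup of $\PSU(3,q)$ whose power graph is non-chordal. Write $d=\gcd(3,q+1)$ for the order of the centre $Z$ of $\SU(3,q)$, so $\PSU(3,q)=\SU(3,q)/Z$. First I would pass to the images in $\PSU(3,q)$ of two maximal tori of $\SU(3,q)$: the diagonal torus $C_{q+1}\times C_{q+1}$, whose image is the abelian group $T_{1}\cong (C_{q+1}\times C_{q+1})/Z$ of order $(q+1)^{2}/d$, and the cyclic torus $C_{q^{2}-1}$, whose image is the cyclic group $T_{2}\cong C_{(q^{2}-1)/d}$ (the centre is scalar, hence lies in every maximal torus, so both images are genuine subgroups). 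The whole strategy is to show that for all but finitely many $q$ at least one of $T_{1},T_{2}$ already has a non-chordal power graph, and then to dispose of the finite exceptional set by hand.

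I would treat $T_{1}$ first. It is abelian, hence nilpotent, and since $q$ is odd we have $2\mid q+1$. Every Sylow subgroup of $C_{q+1}\times C_{q+1}$ has the shape $C_{r^{a}}\times C_{r^{a}}$ and is therefore non-cyclic, and the quotient by the central $C_{3}$ (when $d=3$) only alters the $3$-part. Applying Theorem \ref{Chordal_pre_th5}, one checks that $P(T_{1})$ is non-chordal whenever $q+1$ has at least two distinct prime divisors, the single exception being the coincidence $T_{1}\cong C_{2}\times C_{6}$ that occurs for $q=5$. Consequently $P(T_{1})$ is chordal only when $q+1$ is a power of $2$, that is $q=2^{a}-1$, or when $q=5$.

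It then remains to handle $q=2^{a}-1$, for which automatically $d=1$, so $\PSU(3,q)=\SU(3,q)$ and $T_{2}=C_{q^{2}-1}$. Here $q^{2}-1=2^{a+1}(2^{a-1}-1)$, and by Corollary \ref{Chordal_pre_cor1} the cyclic group $T_{2}$ is non-chordal unless $q^{2}-1$ is a prime power or of the form $r^{\alpha}s$. Since $q=2^{a}-1$ is itself a prime power, $2^{x}-1\mid 2^{a}-1$ forces $a$ to be prime (Mihailescu's theorem, Theorem \ref{Chordal_pre_th8}, rules out $2^{a}-1$ being a proper prime power); for $a>3$ this makes $a-1$ composite, so $2^{a-1}-1$ is composite, and again by Theorem \ref{Chordal_pre_th8} it is not a proper prime power, hence carries at least two distinct odd prime factors. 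Then $q^{2}-1$ has at least three distinct prime divisors and $T_{2}$ is non-chordal. The only survivors are $a=2,3$, i.e.\ $q=3,7$. Combining the two analyses, both tori are chordal exactly for $q\in\{3,5,7\}$.

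Finally I would settle these three cases individually, just as the even case settles $\PSU(3,8)$. For $q=5$ the group $\PSU(3,5)$ contains $A_{7}$ as a maximal subgroup, and $P(A_{7})$ is non-chordal by the alternating-group result proved above (non-chordal for $n\ge 7$); this closes $q=5$ cleanly. For $q=3$ and $q=7$ no cyclic or alternating subgroup is non-chordal, so here I would exhibit an explicit induced cycle of length $\ge 4$ spanning several cyclic subgroups, verified by direct computation in $\SU(3,q)$. I expect precisely this last step to be the main obstacle: in $\PSU(3,3)$ and $\PSU(3,7)$ every maximal torus has chordal power graph, so the witnessing cycle cannot be confined to a single nice subgroup and must be assembled from elements of several overlapping cyclic subgroups.
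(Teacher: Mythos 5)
Your reduction to the exceptional set $q\in\{3,5,7\}$ is correct, and it is a genuinely valid variant of the paper's: the paper uses only the single cyclic torus of order $(q^{2}-1)/\gcd(q+1,3)$, observes that both $q-1$ and $(q+1)/\gcd(q+1,3)$ are even while only one of $q\pm1$ is divisible by $4$, and concludes via Corollary \ref{Chordal_pre_cor1} that the only surviving pairs $(q-1,q+1)$ are $(2,4),(4,6),(6,8)$, i.e.\ $q=3,5,7$. You reach the same set by splitting the work between the two tori: your analysis of $T_{1}=(C_{q+1}\times C_{q+1})/Z$ via Theorem \ref{Chordal_pre_th5} (non-cyclic Sylow subgroups $C_{r^{a}}\times C_{r^{a}}$, with the lone chordal coincidence $C_{2}\times C_{6}$ at $q=5$) correctly isolates $q+1=2^{a}$ and $q=5$, and your Mersenne analysis of $q^{2}-1=2^{a+1}(2^{a-1}-1)$ using Mihailescu (Theorem \ref{Chordal_pre_th8}) correctly isolates $q=3,7$; the paper's parity argument is shorter, while yours needs no information beyond the two standard tori. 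Your treatment of $q=5$ via the maximal subgroup $A_{7}$ is exactly the paper's.

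The genuine gap is that $q=3$ and $q=7$ are never actually settled: you announce that an explicit induced cycle would be found ``by direct computation in $\SU(3,q)$'' but exhibit none, and nothing proved earlier in the paper (or in your reduction) implies non-chordality for these two groups, so the proof as written is incomplete precisely where you predicted the obstacle. The paper closes both cases by locating the cycle inside a small proper subgroup rather than a torus: in $\PSU(3,3)$ it takes the subgroup $4\cdot S_{4}$, given by an explicit presentation, whose power graph contains the cycle $(aed,\,ad,\,d,\,ed,\,e,\,aed)$; in $\PSU(3,7)$ it takes $7^{1+2}{:}48=\langle a,b,c\mid a^{7}=b^{49}=c^{48}=1,\ ab=ba,\ bc=cb,\ cac^{-1}=ab\rangle$, whose power graph contains the $6$-cycle $(b^{7}c^{14},\,b^{7},\,b^{48},\,bc,\,c,\,c^{2},\,b^{7}c^{14})$. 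Your closing worry---that since every maximal torus of $\PSU(3,3)$ and $\PSU(3,7)$ is chordal, the witnessing cycle cannot live in a single nice subgroup---is too pessimistic: any induced cycle of length at least $4$ must of course straddle several cyclic subgroups, but it can still sit inside a small solvable subgroup, which is exactly what makes the paper's verification finite and easy; until such cycles or subgroups are produced, your argument does not yet prove the theorem for $q=3,7$.
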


\begin{proof}
If $q$ is odd, then $\PSU(3,q)$ contains a cyclic subgroup of order $(q^{2}-1)/gcd(q+1, 3)$. Since $q$ is odd, both the numbers $q-1$ and $q+1/gcd(q+1, 3)$ are even. Thus, $\mathcal{P}(\PSU(3, q))$ is chordal if $(q-1)(q+1)/gcd(q+1, 3)$ is either a power of $2$ or of the form $2^{k}p'$, where $p'$ is an odd prime. \medskip

First, suppose that both $q+1/gcd(q+1, 3)$ and $q-1$ are powers of $2$. As only one of $q+1$ and $q-1$ is divisible by $4$, the pair $(q-1,q+1)$ is either $(2,4)$ or $(4,6)$. Hence, $q=3$ or $5$. \medskip

Next, suppose that $(q-1)(q+1)/gcd(q+1, 3)=2^{k}p'$. Then one of $q-1$ and $q+1/gcd(q+1, 3)$ is a power of $2$. {First, we assume that $q-1$ is a power of $2$.} Now if $q \neq 3, 9$, then $q+1$ is either of the forms $2p'$ or $6p'$ for some odd prime $p'$. If $q+1=6p'$, then $\PSU(3, q)$ contains a non-power-chordal subgroup $C_{2^k}\times C_{2p}$. Again, if $q+1=2p'$, then $\PSU(3, q)$ contains the subgroup $C_{q+1}\times C_{q+1/gcd(q+1,3)}$. Then $C_{2p'}\times C_{2p'}$ is a non-power-chordal subgroup of $\PSU(3, q)$. So either $q=3$ or $q=9$.
\medskip

{Again, if $(q+1)/gcd(q+1, 3)$ is a power of $2$, then $q-1$ must be divisible by $2p'$ (as $q$ is odd). Then $\PSU(3, q)$ contains the non-power-chordal subgroup $C_{2^r}\times C_{2p'}$.}\medskip

We now conclude the theorem by proving the power graphs of $\PSU(3,3)$, $\PSU(3,5)$ and $\PSU(3,9)$ are non-chordal.\medskip

If $q=3$, then $\PSU(3,3)$ contains $4\cdot S_{4}$. Now, $4\cdot S_{4}$ is given by $\langle a,b,c,d,e | a^{4}=d^{3}=1, b^{2}=c^{2}=e^{2}=a^{2}, ab=ba, ac=ca, ad=da, eae^{-1}=a^{-1}, cbc^{-1}=a^{2}b, dbd^{-1}=a^{2}bc, ebe^{-1}=bc, dcd^{-1}=b, ece^{-1}=a^{2}c, ede^{-1}=d^{-1}\rangle$. Then, the power graph of $4\cdot S_{4}$ contains the cycle $(aed, ad, d, ed, e, aed)$. Therefore $\mathcal{P}(\PSU(3,3))$ is not a chordal graph.\medskip

If $q=5$, then using GAP and Sage \cite{GAP, Sage} we observe that $G$ has $525$ elements of order $2$, $3500$ elements of order $3$, $10500$ elements of order $6$, $12600$ elements of order $10$ and $15624$ elements of order $5$. Now a subgroup of order $2$ is contained in $6$ distinct cyclic subgroups of order $10$ and $10$ distinct cyclic subgroups of order $6$. Similarly, a subgroup of order $3$ is shared by $3$ distinct subgroup of order $6$. On the other hand, among the elements of order 5, 525 are contained in the cyclic subgroups of order $10$; so an element of order $5$ is contained in $25$ different cyclic subgroups of order $10$. As a result, we can choose elements $a, b$ of order $10$ and elements $c, d$ of order $6$ such that $a^{2}=b^{2}$, $c^{2}=d^{2}$, $c^{3}=b^{5}$ and $d^{3}=a^{5}$. Then $(a, a^{2}, b, b^{5}, c, c^{2}, d, d^{3}, a)$ is an $8$-cycle in $\mathcal{P}(\PSU(3,5))$ and this confirms $\PSU(3,5)$ is a non-power-chordal group.\medskip

If $q=9$, then $C_{8}\times C_{10}$ contained in $\PSU(3, q)$. So, by Theorem \ref{Chordal_pre_th5}, $C_{8}\times C_{10}$ is a non-power-chordal group.

Therefore, if $q$ is a power of an odd prime, the power graph of $\PSU(3,q)$ is not a chordal graph.
\end{proof}

\begin{theorem}
The power graph of $\Sz(q)$, where $q=2^{2e+1}$,  is a chordal graph if and only if each of the numbers $q-1$,  $q+ \sqrt{2q} +1$ and $q- \sqrt{2q}+1$ are either prime powers or a product of a prime and a prime power.
\end{theorem}

\begin{proof}
The group $\Sz(q)$ contains maximal cyclic subgroups of orders $4$, $q-1$, $q+\sqrt{2q} +1$, and $q- \sqrt{2q}+1$. So, if the power graph of $\Sz(q)$ contains a cycle of length at least $4$, then the cycle must be contained in one of the maximal cyclic subgroups. Hence, by Theorem \ref{Chordal_pre_th1} and Theorem \ref{Chordal_pre_th5}, these three numbers are either a power of some prime or of the form $p_{1}^{a}p_{2}$ where $p_1$ and $p_2$ are distinct primes.
The converse of the theorem is obvious.
\end{proof}

\begin{theorem}
Let $G$ be the Ree group ${}^{2}{G_{2}}(q)=R_{1}(q)$ where $q=3^{2e+1}$. Then there exist infinitely many values of $q$ such that $\mathcal{P}(G)$ is  a chordal graph.
\end{theorem}

\begin{proof}
We observe that $C_{2} \times \PSL(2,q)$ is the centralizer of an involution in the group $G$. The group $C_{2}\times \PSL(2,q)$ carries the subgroups $C_2 \times C_{(q \pm 1)/2}$. Therefore, by Theorem \ref{Chordal_pre_th5}, the power graph of $G$ is a chordal graph if $(q \pm 1)/2$ is either a power of $2$ or a power of an odd prime or of the form $2p^{r}$.

{Let both $q\pm 1$ be powers of $2$. Choose $q-1=n$. Then $q+1=n+2$. If both $n$ and $n+2$ are powers of $2$ then $n=2$, and this gives $q=3$. But ${}^{2}{G_{2}}(3)=R_{1}(3)$ is not simple, although this group is a non-power-chordal group (we checked this using GAP \cite{GAP}).}


Let $q+1=2p_1^{r_1}$ and $q-1=2p_2^{r_2}$, where $r_1$ and $r_2$ are odd primes. Then the diophantine equation $p_1^{r_1}-p_{2}^{r_2}=1$ has a solution. This leads to a contradiction to Theorem \ref{Chordal_pre_th8} as both $p_1$ and $p_2$ are odd.

Similarly, both $q\pm 1$ can not be of the form $4p^{k}$ as the diophantine equation $2x-2y=1$ has no solution.

Again, if any one of $q+1$ or $q-1$ is $4p_1^{s}$ and the other one is $2p_{2}^{t}$. Then, for $x=p_1^{s}$ and $y=p_{2}^{t}$, the corresponding diophantine equation is either $2x-y=1$ or $x-2y=1$. One can check that solutions exists for infinitely many values of $x$ and $y$.
So in this case, there exist infinitely many values of $q$ for which $G$ is a power-chordal group.
\end{proof}

\subsection{Lie type simple groups of rank $2$}
The groups $\PSL(3,q)$, $\PSp(4,q)$, $\PSU(4,q)$, $\PSU(5,q)$, $G_{2}(q)$, ${}^{2}{F_{4}}(q)$, and ${}^{3}{D_{4}}(q)$ belong to the class of simple groups of Lie type of rank $2$. In this section, we prove that there are only two groups, namely $\PSL(3,2)$ and $\PSL(3,4)$, whose power graph is a chordal graph.
\begin{theorem}
If $q$ is a power of $2$, then the power graph of $\PSL(3,q)$ is a chordal graph if $q=2, 4$.
\end{theorem}

\begin{proof}
First, suppose that $q=2^n$ where $n >2$. If $q$ is an odd power of $2$, then $3$ does not divide $q-1$; on the other hand, if $q$ is an even power of $2$, then $q-1$ is not a power of 3 (according to the solution of Catalan's conjecture). Hence, $q-1$ has a prime divisor greater than 3.

Let $\alpha$ be an {element of} the multiplicative group of $GF(q)$ of order $p>3$, where $p$ is a prime. Now, consider the elements 
$$h=\begin{bmatrix}
\alpha & 0 & 0 \\ 0 & \alpha & 0 \\ 0 & 0 & \alpha^{-2}
\end{bmatrix}, \
k=\begin{bmatrix}
0 & \alpha & 0 \\ 1 & 0 & 0 \\ 0 & 0 & \alpha^{-1}
\end{bmatrix}.$$

Then, we get $o(h)=p$, $o(k)=2p$, $hk=kh$ and $k^{2}=h$. Therefore, $\SL(3,q)$ contains the cycle $(k^{p}, hk^{p}, h, k, k^{p})$. Since neither $h$ nor $k$ contains a non-identity scalar matrix, $\SL(3,q)/Z(\SL(3,q))$ also contains a cycle of length $4$.\medskip

We now show that the groups $\PSL(3,2)$ and $\PSL(3,4)$ are power-chordal. Since $\PSL(3,2)$ is isomorphic to $ \PSL(2,7)$, the power graph of $\PSL(3,2)$ is chordal by Theorem \ref{Chordal_psl}. On the other hand $\PSL(3,4)$ is an EPPO group; hence, by Corollary \ref{Chordal_pre_cor4}, the power graph of $\PSL(3,4)$ is a chordal graph. This completes the proof of the theorem.
\end{proof}

\begin{theorem}
If $q$ is a power of an odd prime $p$. Then the power graph of $\PSL(3,q)$ is not a chordal graph.
\end{theorem}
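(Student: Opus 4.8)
The plan is to exhibit, for every odd prime power $q$, an explicit induced cycle of length at least $4$ in $P(\PSL(3,q))$, using the same matrix construction that worked in the even case. The key structural fact is that $\SL(3,q)$ contains elements $h,k$ with $hk=kh$, $k^2=h$, $o(h)=p'$ and $o(k)=2p'$ for a suitable prime $p'$, so that the four vertices $k^{p'},\,hk^{p'},\,h,\,k$ (and their images in the projective quotient) close up into a $C_4$. First I would take a generator $\alpha$ of a suitable cyclic subgroup of the multiplicative group $GF(q)^{*}$ (or of $GF(q^2)^{*}$, depending on which torus I use), arranged so that $\alpha$ has odd prime order $p'>3$; the point of choosing $p'>3$ is to guarantee that the relevant matrices carry no nontrivial scalar, so that passing to the centre-quotient $\PSL(3,q)=\SL(3,q)/Z$ does not collapse the cycle.

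The second step is to handle the descent to $\PSL$ carefully. Because $q$ is odd, $\gcd(3,q-1)$ may equal $3$, so $Z(\SL(3,q))$ can be nontrivial (it is $C_{\gcd(3,q-1)}$), and I must check that the four chosen elements remain pairwise distinct and retain their adjacency/non-adjacency pattern modulo $Z$. Since the diagonal entries of $h$ and $k$ involve $\alpha,\alpha^{-2},\alpha^{-1}$ with $\alpha$ of order coprime to $3$, the only scalar in $\langle k\rangle$ is the identity, so the quotient map is injective on $\langle k\rangle$ and the induced $C_4$ survives. I would verify that $k^{p'}\sim hk^{p'}$ and $h\sim k$ are edges (both pairs lie in a common cyclic group), while $k^{p'}\not\sim h$ and $hk^{p'}\not\sim k$ are non-edges, so the cycle is genuinely induced rather than chorded.

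The main obstacle is the existence of the right prime $p'$: I need $q-1$ (or $q+1$, or $(q^2+q+1)/\gcd(3,q-1)$, i.e.\ the order of one of the maximal tori) to have a prime divisor $p'>3$, since otherwise $\alpha$ cannot be chosen of order exceeding $3$ and the construction degenerates. This is where small values of $q$ become exceptional cases that must be treated separately by hand, exactly as the even-characteristic theorem treated $q=2,4$ and the earlier $\PSU(3,q)$ theorems treated $q=3,5,7$. The cleanest route is to argue that for all but finitely many odd $q$ some torus order has a prime factor $>3$ (using that $q-1$, $q+1$ and $q^2+q+1$ cannot all be $\{2,3\}$-numbers simultaneously for large $q$, an application of Mih\`ailescu's Theorem~\ref{Chordal_pre_th8} together with elementary bounds), and then to dispose of the finitely many residual small $q$ by pointing to a concrete non-chordal subgroup in each case (typically a subgroup whose power graph is already known to be non-chordal, such as a product $C_3\times Q_8$ or an $\SL(2,3)$-type section, via Theorem~\ref{Chordal_pre_th5}). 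I expect the bookkeeping for these small exceptional $q$ to be the most delicate part of the argument.
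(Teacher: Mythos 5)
Your plan inherits, and would in fact reproduce, a fatal flaw: the quadrilateral $k^{p'},\,hk^{p'},\,h,\,k$ is not an induced $C_4$. Since $k^{2}=h$, you have $hk^{p'}=k^{p'+2}$, so all four vertices lie in the single cyclic subgroup $\langle k\rangle\cong C_{2p'}$; and since $p'$ is odd, $\gcd(p'+2,2p')=1$, so $hk^{p'}$ is a \emph{generator} of $\langle k\rangle$ and $k$ is a power of it. The pair $\{hk^{p'},k\}$, which you explicitly list as a non-edge to be "verified", is an edge, and your cycle is a chorded quadrilateral (two triangles). No re-choice of vertices inside $\langle k\rangle$ can help: $P(C_{2p'})$ is chordal by Corollary~\ref{Chordal_pre_cor1}, and the image of $\langle k\rangle$ in $\PSL(3,q)$ is again cyclic of order dividing $2p'$, so your careful injectivity check on the quotient protects a cycle that does not exist. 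You should also know that the paper's printed proof commits the identical error: its Case~II uses the very same cycle $(k^{p},hk^{p},h,k)$, and in its Case~I one has $h^{3}=g^{2}$ and $(gh)^{3}=g$, so $g^{2},gh,h^{2},gh^{2}$ all lie in $\langle gh\rangle\cong C_{12}$, whose power graph is chordal. So "the same matrix construction that worked in the even case" never actually worked, and agreement with the paper does not validate the step.

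Because the relevant cyclic groups ($C_{2p'}$, $C_{12}$) have chordal power graphs, a genuine induced cycle must draw vertices from at least two distinct maximal cyclic subgroups. For odd $q$ a clean repair exists: embed $\mathrm{GL}(2,q)$ into $\SL(3,q)$ via $A\mapsto\mathrm{diag}(A,(\det A)^{-1})$; since $\SL(2,q)$ for odd $q$ has generalized quaternion Sylow $2$-subgroups, any odd prime $p'\mid q-1$ yields $C_{p'}\times Q_{8}\le\SL(3,q)$ (scalars of order $p'$ times a $Q_8$), which is non-chordal by Theorem~\ref{Chordal_pre_th5}, meets the centre (of order dividing $3$) trivially when $p'\neq 3$, and hence embeds in $\PSL(3,q)$. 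An explicit induced $C_4$ there is $u\sim a\sim t\sim b\sim u$, where $a=(w,i)$ and $b=(w,j)$ have order $4p'$ and lie in distinct maximal cyclic subgroups, $u=a^{2p'}=b^{2p'}$ is the central involution, and $t=a^{4}=b^{4}$ has order $p'$; here $u\nsim t$ (orders $2$ and $p'$) and $a\nsim b$, so there is no chord. Two further cautions on your exceptional-case plan: taking $p'=3$ is not a fallback, since then $h=\mathrm{diag}(\alpha,\alpha,\alpha^{-2})=\alpha I$ is scalar and vanishes in $\PSL(3,q)$ (a degeneration the paper's Case~II also overlooks); and relying on $q-1$ alone cannot give "finitely many residual $q$", since $q-1$ may be a $\{2,3\}$-number for (conjecturally) infinitely many prime powers, while bounding the $q$ for which both $q\pm1$ are $3$-smooth is St{\o}rmer-type territory, not a direct consequence of Mih\`ailescu's theorem. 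The subgroup route above avoids all of this bookkeeping except for the single family $q-1=2^{a}3^{b}$, which still needs separate treatment (as the paper does only for $q=3$, via $\PSL(3,3)\supset 3^{2}{:}2S_{4}\supset D_{36}$).
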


\begin{proof}
Since $p$ is odd, $q-1$ is even. We now consider the following cases  \medskip

\textbf{Case I:} Let $q-1=2^{k}\geq4$. Consider the elements

$$g=\begin{bmatrix}
\alpha & 0 & 0 \\ 0 & \alpha & 0 \\ 0 & 0 & \alpha^{-2}
\end{bmatrix}\ 
\text{ and }\ 
h=\begin{bmatrix}
0 & -1 & 0 \\ 1 & 1 & 0 \\ 0 & 0 & 1
\end{bmatrix},$$
where $\alpha$ is an element of order $4$ in the multiplicative group $GF(q)$. Then $o(gh)=o(gh^2)$, $gh=hg$, and $gh^2=h^{2}g$. Then, the power graph of $\SL(3,q)$ contains a cycle $(g^{2}, gh, h^2, gh^{2}, g^{2})$. Now, as none of $g$ and $h$ is a non-identity scalar matrix, it follows that poer graph of $\PSL(3,q)$ contains a cycle of length $4$. \medskip

\textbf{Case II:} Let $q-1$ have an odd prime divisor, say $p'$, and let $\alpha$ be an element of order $p'$ in the multiplicative group $GF(q)$. Let 
$$g=\begin{bmatrix}
\alpha &0 & 0 \\ 0 & \alpha & 0 \\ 0 & 0 & \alpha^{-2}
\end{bmatrix}
\text{ and }
h=\begin{bmatrix}
0 & \alpha & 0 \\ 1 & 0 & 0 \\ 0 & 0 & \alpha^{-1}
\end{bmatrix}.$$
Then, the power graph of $\SL(3,q)$ contains a 4-cycle $(k^p, hk^{p}, h, k, k^p)$ and hence power graph of $\PSL(3,q)$ contains a $4$ length cycle. \medskip

Now, to complete the proof of the theorem, we must show that $\PSL(3,3)$ is non-power-chordal group. Note that $\PSL(3,3)$ contains a subgroup $3^{2}:2S_{4}$; and the group $3^{2}:2S_{4}$ contains the dihedral group $D_{36}$ whose power graph is not a chordal graph. Hence, the power graph of $\PSL(3,3)$ is not a chordal graph.
\end{proof}
In order to investigate the chordality of the power graph of $\PSp(4,q)$, we will now review a theorem.
\begin{theorem}[Mitchell Theorem, \cite{King}]
\label{Chordal_pre_th9}
Assuming $p>2$, the maximal subgroups of $\PSp(4,q)$ {are as follows:}
\begin{enumerate}
    \item 
    $\PSp(4,q_{0})$, where $q$ is an odd prime power of $q_{0}$;
    \item 
    $PGSp(4, q_{0})$ where $q_{0}^{2}=q$;
    \item 
    the stabilizer of a point and a plane, having index $q^{3}+q^{2}+q+1$;
    \item 
    the stabilizer of a parabolic congruence, having index $q^{3}+q^{2}+q+1$;
    \item 
     the stabilizer of a hyperbolic congruence,having index $q^{2}(q^{2}+1)/2$;
    \item 
    the stabilizer of a elliptic congruence, having index $q^{2}(q^{2}-1)/2$;
    \item 
    the stabilizer of a quadratic, having index $q^{3}(q^{2}+1)(q+1)/2$, provided $q>3$;
    \item 
    the stabilizer of a quadratic, having index $q^{3}(q^{2}+1)(q-1)/2$, provided $q>3$;
    \item 
    the stabilizer of a twisted cubic,  having index $q^{3}(q^{4}-1)/2$, for $q>7, p>3$;
    \item 
    groups of order $1920$ (for prime $q$ and $q\equiv \pm 1$(mod $8$), $960$ (for prime $q$ and $q\equiv \pm 3$(mod $8$)), $720$ (for $q$ prime and $\equiv \pm 1$(mod $12$)), $360$ (for $q \neq 7$ is a prime and $\equiv \pm 5$(mod $12$)), $2520$ (for $q=7$) or $A_{7}$.
\end{enumerate}
\end{theorem}

\begin{theorem}
The power graph of  $\PSp(4,q)$ is not a chordal graph.
\end{theorem}
\begin{proof}
Let $G=\PSp(4,q)$. First, suppose that $q$ is a power of 2. Then $\PSp(4,q)$ contains $\PSL(2,q)\times \PSL(2,q)$, and so it contains $C_{q\pm 1}\times C_{q\pm 1}$. Now $(q+1, q-1)=1$ and $3$ divides one of them. Thus, $\mathcal{P}(G)$ is a chordal graph if and only if one of $q\pm1$ is a prime and the other is a power of another prime. So we must have $q=2$, $4$ or $8$. \medskip

Next, suppose that $q$ is a power of an odd prime. Then $G$ contains the central product of two copies of $\SL(2,q)$, and hence it contains $C_{q\pm 1}\times C_{(q\pm 1)/2}$. Thus if $\PSp(4,q)$ is power-chordal, then both $(q\pm 1)/2$ have to be prime powers (as $C_{q\pm 1}\times C_{(q\pm 1)/2}$ contained in $\PSp(4, q)$).
Now one of $(q-1)/2$ or $(q+1)/2$ is even, so one of $(q\pm 1)/2$ must be a power of $2$. This implies one of $C_{q\pm 1}$ must be $4$, or else $\PSp(4,q)$ contains a non-power-chordal subgroup.
 Thus the possible values of $q$ are $3$ and  $5$.   \medskip

If $q=2$, then $\PSp(4,2)$ is isomorphic to the non-power-chordal group $S_6$. If $q=3$ or $4$, then $\PSp(4,q)$ contains $S_6$, and so its power graph is non-chordal.
Again, $\PSp(4,5)$ contains the subgroup $S_{3} \times S_{5}$ whose power graph is not a chordal graph.
Finally, by Theorem \ref{Chordal_pre_th9}, the group $\PSp(4,8)$ contains $\PSp(4,2)$, and so $\mathcal{P}(\PSp(4,8))$ not a chordal graph.
Thus, $\PSp(4,q)$ is a non-power-chordal group.
\end{proof}

\begin{theorem}
\label{G_{2}(q)}
The power graph of $G_{2}(q)$ is not a chordal graph.
\end{theorem}

\begin{proof}
We observe that both $\SL(3,q)$ and $\SU(3,q)$ are the subgroups of $G_{2}(q)$ (see \cite{Bruce,Kleidman}). Now $\SL(3, q)\cong \PSL(3, q)$ if $q \not\equiv 1(mod\ 3)$ and $\SU(3, q)\cong \PSU(3,\ q)$ if $q \not\equiv -1(mod\ 3)$. Thus, for any $q$, either $\PSL(3,q)$ or $\PSU(3,q)$ is contained in $G_{2}(q)$. Now, $\PSL(3,q)$ is power-chordal if $q=2, 4$, whereas $\PSU(3,q)$ is non-power-chordal for every $q\neq2$. So we only need to check for $q=2$. The group $G_{2}(2)$ is not simple, and its power graph is not a chordal graph since it contains $\PSU(3,3)$. Hence, the power graph of $G_{2}(q)$ is not a chordal graph.
\end{proof}

We now check the chordality of the power graph of other simple groups of Lie type of rank $2$.
\begin{itemize}
\item Let $G=\PSU(4,q)$. If $q>2$, then $G$ contains the non-power-chordal group $\PSp(4,q)$. Again, if $q=2$, then $G$ isomorphic to $\PSp(4,3)$. Hence, the power graph of $\PSU(4,q)$ is not chordal.
\item 
{Consider the group $\PSU(5,q)$}. If $q=2$, then $\PSU(5, q)$ contains a non-power-chordal subgroup $\PSU(4, 2)$. Again, if $q>2$, then $\PSU(5, q)$ contains $\SL(2, q)\times \SL(2, q)$ (see \cite{Wilson}). Now, by Theorem \ref{Chordal_product_th1}, the power graph of $\SL(2, q)\times \SL(2, q)$ is a non-chordal graph, and so $\mathcal{P}(\PSU(5, q))$ is a non-chordal graph.
\item The group ${}^{2}{F_{4}}(2^d)$ contains ${}^{2}{F_{4}}(2)$ for all odd $d$ (see \cite{Malle}), and so it contains $\PSL(3,3)$. Thus, ${}^{2}{F_{4}}(2^d)$ is non-power-chordal.
\item The group ${}^{3}{D_{4}}(q)$ contains $G_2(q)$ (see \cite{Kleidman_{1}}); so its power graph is also a non-chordal graph.
\end{itemize}
\subsection{Lie type simple groups of rank more than $2$}
Let $G$ be a simple group of Lie type of rank greater than $2$. Now, as the Dynkin diagram of $G$ has a single bond in each case,  $G$ contains $\PSL(3,q)$ as a subgroup. But the only values for which the power graph of $\PSL(3,q)$ is chordal are $q = 2, 4$. Hence, we have to check only for those simple groups whose underlying fields are finite fields with $2$ and $4$ elements, respectively.

Now $\PSL(4,2)$ is isomorphic to $A_8$. So its power graph is a non-chordal graph. The group $\PSL(4,4)$ contains $\PSL(4,2)$, whose power graph is not a chordal graph. Therefore, the power graph of $\PSL(4,4)$ {is never chordal.}

{Observe that} $\PSp(6,2)$ contains $S_8$, whose power graph is a non-chordal graph. Again, $\PSp(6,4)$ contains $\PSp(6,2)$. So, in this case, we get both $\PSp(6, 2)$, $ \PSp(6, 4)$ are non-power-chordal groups.

If $q=2,4$, then the orthogonal and unitary groups of Lie type of rank $3$ contains $\PSp(4,q)$. Hence, they are non-power-chordal groups.

\subsection{Sporadic groups}
There are $26$ sporadic groups, namely, 5 Mathieu groups ($M_{11}, M_{12}, M_{22}, M_{23}, M_{24}$), 3 Conway groups ($Co_1,Co_2,Co_3$), 3 Fischer groups ($Fi_{22},Fi_{23},Fi_{24}$), 4 Janko  groups ($J_1,J_2,J_3,J_4$),  the Monstar group ($M$), the Baby Monster group ($B$), the McLaughlin  group ($M^cL$), the Higman-Sims group ($HS$), the Held group ($He$), the Rudvalis group ($Ru$), the suzuki group ($Suz$), the Harada-Norton group ($HN$), the O'Nan group ($O'N$), the Lyons group ($Ly$) and the Thompson group ($Th$).  In this section, we are going to show that none of the above groups are power-chordal.

\begin{theorem}
\label{chordal_sps_th1}
{$M_{11}$ is not power-chordal.}
\end{theorem}
\begin{proof} We observe that the Mathieu group $M_{11}$ has elements of order 2, 3, 4, 5, 8, and 11 (see \cite{Conway, Kumar}); in which the elements of order $5$ and $11$ form disjoint complete graphs in $\mathcal{P}^*(M_{11})$. Apart from these elements, there are $165$ elements of order $2$, $440$ elements of order $3$, $990$ elements of order $4$, $1320$ elements of order $6$, and $1980$ elements of order $8$.
Then we can choose $4$ elements, say $a$, $b$, $c$, and $d$, each of order $6$ such that $a^{2}=b^{2}$, $a^{3}=d^{3}$, $b^{3}=c^{3}$, $c^{2}=d^{2}$. Then $\mathcal{P}^*(M_{11})$ contains an $8$-cycle $(a, a^{2},b, b^{3},c, c^{2}, d, d^{3}, a)$. Hence, $\mathcal{P}(M_{11})$ is a non-chordal graph.
\end{proof}

\begin{theorem}
\label{chordal_sps_th2}
{$M_{22}$ is not power-chordal.}
\end{theorem}
\begin{proof}
	As per GAP and Sage \cite{ GAP, Sage}, we observe that $M_{22}$ contains $1155$ elements of order $2$, $12320$ elements of order $3$ and $36960$ elements of order $6$. So a subgroup of order $2$ is contained in $16$ distinct cyclic subgroup of order $6$, and a subgroup of order $3$ is contained in $3$ distinct cyclic subgroup of order $6$. Thus, we can choose elements $a$, $b$, $c$, and $d$, each of order $6$, with $a^{3}=b^{3}$, $b^{2}=c^{2}$, $c^{3}=d^{3}$ and $d^{2}=a^{2}$. Then $\mathcal{P}(M_{22})$ contains an $8$-cycle $(a, a^{3}, b, b^{2}, c, c^{3}, d, d^{2}, a)$, which ensures the non-chordality of $\mathcal{P}(M_{22})$.
\end{proof}

\begin{theorem}
\label{chordal_sps_th4}
{$J_{1}$ is not power-chordal.}
\end{theorem}
\begin{proof}
By the information at \cite{Conway}, we observe that $J_{1}$ contains $3$ elements say $x, y, z$ of orders $6, 10$ and $15$ respectively such that $x^{2}=z^{5}, x^{3}=y^{5}$ and $y^{2}=z^{3}$. Then $\mathcal{P}(J_{1})$ contains a cycle $(x^{2}, x, x^{3}, y, y^{2}, z, x^{2})$, whose length is $6$. Thus, the Janko group $J_{1}$ is not a power-chordal group.

\end{proof}

\begin{theorem}
\label{chordal_sps_th3}
{$J_3$ is not power-chordal.}
\end{theorem}
\begin{proof}
We observe that the Janko group $J_3$ contains elements $a$, $b$, and $c$ such that $o(a)=6$, $o(b)=10$, $o(c) = 15$, and $a^3=b^5$, $a^2=c^5$, $b^2=c^3$. Therefore, $(a^2,a,a^3,b,b^2,c,c^5)$ is a $6$-cycle in the power graph of $J_3$. Therefore, $\mathcal{P}(J_3)$ is not a chordal graph.
\end{proof}
\begin{theorem}
{There are not power-chordal sporadic groups.}
\end{theorem}
\begin{proof}
We have already proved that $\mathcal{P}(M_{11})$, $\mathcal{P}(M_{22})$, $\mathcal{P}(J_{1})$ and $\mathcal{P}(J_3)$ are not chordal. The rest of the sporadic groups always contain a non-power-chordal supgroup (see Table \ref{Table_1}). This completes the proof of the theorem.
\begin{table}[h]
\centering
\begin{tabular}{|c|c|c|}
\hline
\textbf{sporadic simple Groups}&  \textbf{non-power-chordal subgroup(s)} &\textbf{remarks}\\ 
\hline
$M_{12}$ & $M_{11}$ & Theorem \ref{chordal_sps_th1} \\
\hline
$M_{23}, M_{24}$ & $A_{8}$& Theorem \ref{chordal_alternating} \\
\hline
$Fi_{22}$ & $S_{10}$& Theorem \ref{Chordal_symm_th1} \\
\hline
$Fi_{23}$ & $S_{12}$& Theorem \ref{Chordal_symm_th1} \\
\hline
$Fi_{24}$ & $Fi_{23}$, $S_{12}$& Theorem \ref{Chordal_symm_th1} \\
\hline
$J_{2}$ & $\PSU(3, 3)$& Theorem \ref{PSU_odd}\\
\hline
$J_{4}$ & $C_{2^{11}}\times M_{24}$, $A_8$& Theorem \ref{chordal_alternating}\\
\hline
$Co_{2}, Co_{3}$ &  $M_{23}$, $A_8$ & Theorem \ref{chordal_alternating}\\
\hline
$Co_{1}$ & $Co_{2}, Co_{3}$, $A_8$& Theorem \ref{chordal_alternating}\\
\hline
$He$ & $S_{7}$& Theorem \ref{Chordal_symm_th1}\\
\hline
$HS$ & $S_{8}$& Theorem \ref{Chordal_symm_th1}\\
\hline
$O'N$ & $J_{1}$& Theorem \ref{chordal_sps_th3}\\
\hline
$HN$ & $A_{12}$& Theorem \ref{chordal_alternating}\\
\hline
$Ly$ & $A_{11}$& Theorem \ref{chordal_alternating}\\
\hline
$Th$ & $C_{3}\times G_{2}(3)$& Theorem \ref{G_{2}(q)}\\
\hline
$B$ & $(S_{3}\times Fi_{22})\rtimes C_{2}$, $S_{10}$ & Theorem \ref{Chordal_symm_th1}\\
\hline
$M$ & $A_{5}\times A_{12}$& Theorem \ref{chordal_alternating}\\
\hline
$M^{c}L$ & $M_{22}$,& Theorem \ref{chordal_sps_th2}\\
\hline
\end{tabular}
\label{Table_1}
\caption{Non-power-chordal subgroups of sporadic simple groups}
\end{table}
\end{proof}

\section{ Groups of small order}
In this section, we discuss certain finite groups whose power graph belongs to the class of chordal graphs. We already know that, for any prime $p$, the power graph of a group of order  $p^\alpha$ or $2p$ is chordal. In this section, we prove that groups of order $pq$ and $p^{2}q$ are power-chordal. Further, we compute the values of $n$ for which the dicyclic group $\mathrm{Dic}_{n}=\langle a, x| a^{2n}=1, x^{2}=a^{n}, x^{-1}ax=a^{-1} \rangle$ is power-chordal. In addition, we provide a table that describes all power-chordal groups of order up to 47.

\begin{theorem}
\label{Chordal_sg_th1}
Let $G$ be a finite group of order $pq$, where $p,q$ are distinct primes. Then $\mathcal{P}(G)$ is a chordal graph.
\end{theorem}

\begin{proof}
{Assume that} $p>q$. Then there are at most two groups of order $pq$; one is the cyclic group $C_{pq}$ and the other is $C_{p} \rtimes C_{q}$ if $q|(p-1)$. Now, $C_{pq}$  has  a chordal power graph by Theorem \ref{Chordal_pre_th5}. {Note that} $C_{p} \rtimes C_{q}$ is an EPPO group, so it is power-chordal.
\end{proof}

\begin{theorem}
\label{Chordal_dic_th1}
The dicyclic group $\mathrm{Dic}_{n}$ is a power-chordal group if and only if $n$ is either a power of a prime or a product of an odd prime and a power of $2$.
\end{theorem}

\begin{proof}
First, suppose that $\mathcal{P}(\mathrm{Dic}_{n})$ is chordal. Note that $\mathrm{Dic}_{n}$ contains a unique maximal cyclic subgroup, say $H$, of order $2n$ and two other maximal subgroups, say $L$ and $M$, of order $2n$, and the remaining subgroups are of order $4$. Let $a$ and $x$ be two generators of $Dic_{n}$, with the order of $x$ being $4$. Suppose $H=\langle a\rangle$, $L=\langle a^{2}, x \rangle$, and $M=\langle a^{2}, ax\rangle$. Clearly, $a^n$ is the unique involution in $G$, and so $a^n$ is contained in every subgroup of $\mathrm{Dic}_n$. {So the identity and the involution cannot occur in any induced chordless cycle of $\mathcal{P}(\mathrm{Dic}_{n})$.} Let $\mathcal{P}^{**}(\mathrm{Dic}_{n})$ denote the graph obtained by removing the identity and the unique involution from $\mathcal{P}(\mathrm{Dic}_{n})$. Now we show that if $\mathcal{P}^{**}(\mathrm{Dic}_{n})$ contains any induced cycle, it must be contained in $\mathcal{P}^{**}(C_{2n})$. 

First, we look at the maximal subgroups $L$ and $M$. We observe that $\mathcal{P}^{**}(L)$ is a disjoint union of $\mathcal{P}^{**}(C_{n})$ along with some copies of $K_{2}$. Also, $\mathcal{P}^{**}(M)$ has a similar structure. Moreover, $L, M, H$ have a common intersection, which is the subgroup generated by $\langle a^{2}\rangle$. All the other subgroups (except $\langle x \rangle$ and $\langle ax \rangle$) of $\mathrm{Dic}_{n}$, whose orders are $4$, contribute disjoint unions of $K_{2}$ in $P^{**}(\mathrm{Dic}_{n})$; for the exceptional cases, these two subgroups form $K_{2}$ in $L, M$ resp. Note that if there is any vertex from $L$ or $M$ in an induced cycle, they must belong to the cyclic subgroup generated by $\langle a^{2}\rangle$ (which is isomorphic to $C_{n}$). Since $\langle a^{2}\rangle$ is also contained in $H$ this implies that the whole cycle must be contained in $\mathcal{P}^{**}(C_{2n})$. 

 Thus, we conclude that $\mathcal{P}(\mathrm{Dic}_{n})$ is a chordal graph if and only if the power graph of $H$ is chordal. Hence the proof of the theorem follows by Corollary \ref{Chordal_pre_cor1}.
\end{proof}

\begin{theorem}
\label{Chordal_sg_th2}
Any group of order $p^{2}q$, where $p$ and $q$ are distinct primes, is power-chordal.
\end{theorem}

\begin{proof}
Let $G$ be a group of order $p^2q.$ If $G$ is abelian, then either $G\cong C_{p^{2}q}$ or $G\cong C_{p} \times C_{pq}$. So, by Theorem \ref{Chordal_pre_th5} and Corollary \ref{Chordal_pre_cor1}, $\mathcal{P}(G)$ is a chordal graph.\medskip

Next, suppose that $G$ is non-abelian. First, we assume that $p>q$. Then the Sylow $p$-subgroup, say $H$, is normal in $G$. Then either $G\cong C_{p} \times (C_{p} \rtimes C_{q})$ or $G\cong C_{p^2} \rtimes C_{q}$. Now for these groups, we have the following observations 

If $G$ does not contain any element of order $pq$, then its prime graph is a null graph, and hence $G$ is a power-chordal group. Otherwise, $G$ has an element of order $pq$. Now if $G\cong C_{p}\times (C_{p}\rtimes C_q)$, then the chordality of $\mathcal{P}(G)$ follows from Lemma \ref{Chordal_product_th1}. Next, let $G\cong C_{p^2}\rtimes C_{q}$. If possible, let $\cdots x_{1}\rightarrow x_{2}\leftarrow x_{3}\rightarrow x_{4}\leftarrow\cdots$ be a cycle in $\overrightarrow{\mathcal{P}}(G)$. Now every element of $G$ is of order either $q$ or a power of $p$ or $pq$. Consider the element $x_3$. If $o(x_3)$ is a power of $p$ or $q$, then $x_2\sim x_4$.

Next, let $o(x_3)=pq$. Then either $o(x_{2})=q$, $ o(x_{4})=p$ or $o(x_{2})=p$, $o(x_{4})=q$. Without loss of generality, we assume that $o(x_{2})=q$ and $ o(x_{4})=p$. Now $o(x_{1})$ must be either $q$ or $pq$. If $o(x_1)=q$, then $x_{3}\sim x_{1}$. Again, if $o(x_5)=pq$, then the Sylow $p$-subgroup of $G$ is unique and cyclic, and $x_{2}\leftarrow x_{1}$, and so $x_{1}\in \langle c\rangle$. Hence, we obtain the chord $x_{3}\sim x_{1}$. Thus, a cycle of length $\geq4$ is not possible in $\mathcal{P}(G)$. Therefore, $\mathcal{P}(G)$ is chordal. \medskip

Next, {suppose} $p<q$. Clearly, by Burnside's theorem, $G$ is not simple. So it must contain a non-trivial normal subgroup. Here arise two cases.\medskip\\
\textbf{Case I:} Let $G$ contain a normal subgroup of order $pq$, say $K$; if $K$ is cyclic, then $G\cong C_{p} \times (C_{q} \rtimes C_{p})$. Then $\mathcal{P}(G)$  is chordal by Lemma \ref{Chordal_product_th1}. Otherwise, $K$ must be the non-abelian group $C_{q}\rtimes C_{p}$ of order $pq$, and hence $G\cong (C_{q}\rtimes C_{p}) \rtimes C_{p}$. Then, by a similar approach as above, we can show that an induced cycle of length at least $4$ is not possible in $\mathcal{P}(G)$. Hence, $\mathcal{P}(G)$ is chordal.\medskip\\
\textbf{Case II:} If $G$ does not contain any subgroup of order $pq$.
Then the GK-graph of $G$ is edgeless, and so its power graph is  a chordal graph by Corollary \ref{Chordal_pre_cor4}.

Thus, if $G$ is a group of order $p^2q$, then $\mathcal{P}(G)$ is a chordal graph.\end{proof}

Next, we create a table that lists the power-chordal groups of order up to $47$ by using  Theorem \ref{Chordal_sg_th1}, \ref{Chordal_dic_th1}, \ref{Chordal_sg_th2} and the results obtained in Section \ref{direct products}.
\begin{table}[h]
\centering
\begin{tabular}{|c|c|}
\hline
{Order of $G$}&  {Groups} \\ \hline
$24$ & all groups except $C_{2}\times C_{12}$, $C_{3}\times Q_{8}$ and $C_{3}\times D_{4}$\\
\hline
$30$ & all non-cyclic groups \\
\hline
$36$ & all groups except $ C_{36}, C_{6} \times C_{6}$\\ &and  $C_{2}\times (C_{3}\rtimes S_{3})$\\
\hline
$40$ & all groups except $C_{2} \times C_{20}$, $C_{5}\times D_{4}$ \\ & and $C_{5}\times Q_{8}$\\
\hline
$42$ & all non-cyclic groups \\
\hline
$|G|\leq47$ and $|G|\notin \{24, 30, 36, 40, 42\}$ & all groups \\
\hline
\end{tabular}
\caption{Power-chordal groups of order {up to} $47$}
\end{table}

\begin{remark}
For the small groups of order at most $47$, we obtain that almost all the groups are power-chordal except a few groups with the orders $24, 30, 36$, $40$, and $42$.
\end{remark}

\subsection*{Acknowledgements}
The author Pallabi Manna is grateful to CSIR(Grant No- 09/983(0037)/2019-EMR-I) for financial aid. Ranjit Mehatari thanks the SERB, India, for financial support (File Number: CRG/2020/000447) through the Core Research Grant.

\end{document}